\newtheorem{theorem}{Theorem}
\newtheorem{lemma}{Lemma}
\newtheorem{proposition}{Proposition}
\newtheorem{corollary}{Corollary}
\theoremstyle{remark}
\newtheorem{definition}{Definition}
\newtheorem{fact}{Fact}
\let\Pr\relax
\DeclareMathOperator\Pr{\mathbb{P}}
\DeclareMathOperator\Qr{\mathbb{Q}}
\DeclareMathOperator\E{\mathbb{E}}
\newcommand{\KL}[2]{ { D \left({#1} \;\middle\Vert\; {#2}\right) } }
\newcommand{\KLb}[2]{ { D_2 \left({#1} \;\middle\Vert\; {#2}\right) } }
\newcommand{\bw}{{\pmb{w}}}
\newcommand{\bmu}{{\pmb{\mu}}}
\newcommand{\btheta}{{\pmb{\theta}}}
\newcommand{\bTheta}{{\pmb{\Theta}}}
\newcommand{\blambda}{{\pmb{\lambda}}}
\newcommand{\Atheta}{{\text{Alt}(\pmb{\theta})}}
\newcommand{\Prtheta}{\Pr_\btheta^{\calA_\delta}}
\newcommand{\Etheta}{\E_\btheta^{\calA_\delta}}
\newcommand{\Prlambda}{\Pr_\blambda^{\calA_\delta}}
\newcommand{\Elambda}{\E_\blambda^{\calA_\delta}}
\newcommand{\calA}{{\mathcal A}}
\newcommand{\calE}{{\mathcal E}}
\newcommand{\calF}{{\mathcal F}}
\newcommand{\calM}{{\mathcal M}}
\newcommand{\Intp}[1]{ { {\mathbb Z_{\ge 0}}^{#1} } }
\newcommand{\Intpp}[1]{ { {\mathbb Z_{> 0}}^{#1} } }
\newcommand{\Real}[1]{ { {\mathbb R}^{#1} } }
\newcommand{\Realp}[1]{ { {\mathbb R}_{\ge 0}^{#1} } }
\newcommand{\Simplex}[1]{ { {\mathcal M}_1 \left({#1}\right) } }
\newcommand{\dss}{\displaystyle}
\newcommand{\inv}{^{-1}}
\DeclareMathOperator*{\argmax}{arg\,max}
\DeclareMathOperator*{\argmin}{arg\,min}
\title{\textbf{Optimal Best Markovian Arm Identification with Fixed Confidence}}
\author{
Vrettos Moulos
\thanks{Supported in part by the NSF grant CCF-1816861.}
\\
University of California Berkeley \\
\href{mailto:vrettos@berkeley.edu}{vrettos@berkeley.edu}
}
\date{}
\begin{document}

\maketitle

\begin{abstract}
We give a complete characterization of the sampling complexity
of best Markovian arm identification in one-parameter Markovian bandit models. We derive instance specific nonasymptotic and asymptotic lower bounds which generalize those of the IID setting.
We analyze the Track-and-Stop strategy, initially proposed for the IID setting, and we prove that asymptotically it is at most a factor of four apart from the lower bound. Our one-parameter Markovian bandit model is based on the notion of an exponential family of stochastic matrices for which we establish many useful properties. For the analysis of the Track-and-Stop strategy we derive a novel concentration inequality for Markov chains that may be of interest in its own right.
\end{abstract}

\section{Introduction}\label{sec:intro}

This paper is about optimal best Markovian arm identification with fixed confidence. 
There are $K$ independent options which are referred to as arms. 
Each arm $a$ is associated with a discrete time stochastic process, which is characterized by a parameter $\theta_a$ and it's governed by the probability law $\Pr_{\theta_a}$.
At each round we select one arm, without any prior knowledge of the statistics of the stochastic processes.
The stochastic process that corresponds to the selected arm evolves by one time step, and we observe this evolution through a reward function, while the stochastic processes for the rest of the arms stay still.
A confidence level $\delta \in (0, 1)$ is prescribed, and our goal is to identify the arm that corresponds to the process with the highest stationary mean with probability at least $1-\delta$, and using as few samples as possible.

\subsection{Contributions}

In the work of~\cite{GK16} the discrete time stochastic process associated with each arm $a$ is assumed to be an IID process.
Here we go one step further and we study more complicated dependent processes, which allow us to use more expressive models in the stochastic multi-armed bandits framework. More specifically we consider the case that each $\Pr_{\theta_a}$ is the law of an irreducible finite state Markov chain associated with a stationary mean $\mu (\theta_a)$.
We establish a lower bound (\autoref{thm:lower-bound}) for the expected sample complexity,
as well as an analysis of the Track-and-Stop strategy,
proposed for the IID setting in~\cite{GK16},
which shows (\autoref{thm:upper-bound}) that asymptotically the Track-and-Stop strategy in the Markovian dependence setting attains a sample complexity which is at most a factor of four apart from our asymptotic lower bound. Both our lower and upper bounds extend the work of~\cite{GK16} in the more complicated and more general Markovian dependence setting.

The abstract framework of multi-armed bandits has numerous applications in areas like clinical trials, ad placement, adaptive routing, resource allocation, gambling etc. For more context we refer the interested reader to the survey of~\cite{BC12}. Here we generalize this model to allow for the presence of Markovian dependence, enabling this way the practitioner to use richer and more expressive models for the various applications.
In particular, Markovian dependence allows models where the distribution of next sample depends on the sample just observed.
This way one can model for instance the evolution of a rigged slot machine,
which as soon as it generates a big reward for the gambler, it changes the reward distribution to a distribution which is skewed towards smaller rewards.

Our key technical contributions stem from the large deviations theory for Markov chains~\cite{Miller61, Donsker-Varadhan-I-75, Ellis84, Dembo-Zeitouni-98}. In particular we utilize the concept
of an \emph{exponential family of stochastic matrices}, first introduced in~\cite{Miller61}, in order to model our one-parameter Markovian bandit model. Many properties of the family are established which are then used for our analysis of the Track-and-Stop strategy. The most important one is an optimal concentration inequality for the empirical means of Markov chains
(\autoref{thm:concentration-bound}). We are able to establish this inequality for a large class of Markov chains, including those that all the transitions have positive probability.
Prior work on the topic,~\cite{Gillman93, Dinwoodie95, Lezaud98, LP04},
fails to capture the optimal exponential decay, or introduces a polynomial prefactor,~\cite{Davisson-Longo-Sgarro-81}, as opposed to our constant prefactor. This result may be of independent interest due to the wide applicability of Markov chains in many aspects of learning theory such as various aspects of reinforcement learning, Markov chain Monte Carlo and others. 

\subsection{Related Work}

The cornerstone of stochastic multi-armed bandits is the seminal work of~\cite{Lai-Robbins-85}.
They considered $K$ IID process with the objective being to maximize the expected value of the sum of the observed rewards, or equivalently to minimize the so called \emph{regret}. In the same spirit~\cite{Ananth-Varaiya-Walrand-I-87,Ananth-Varaiya-Walrand-II-87} examine the generalization where one is allowed to collect multiple rewards at each time step, first in the case that processes are IID~\cite{Ananth-Varaiya-Walrand-I-87}, and then in the case that the processes are irreducible and aperiodic Markov chains~\cite{Ananth-Varaiya-Walrand-I-87}. A survey of the regret minimization literature is contained in~\cite{BC12}.

An alternative objective is the one of identifying the process with the highest stationary mean as fast as and as accurately as possible, notions which are made precise in~\autoref{sec:fam}.
In the IID setting, \cite{EMM06} establish an elimination based algorithm in order to find an approximate best arm, and~\cite{MT04} provide a matching lower bound. \cite{JMNB14} propose an upper confidence strategy, inspired by the law of iterated logarithm, for exact best arm identification given some fixed level of confidence. In the asymptotic high confidence regime, the problem is settled by the work of~\cite{GK16},
who provide instance specific matching lower and upper bounds. For their upper bound they propose the Track-and-Stop strategy which is further explored in the work of~\cite{KK18}.

The earliest reference for the exponential family of stochastic matrices which is being used to model the Markovian arms can be found in the work of~\cite{Miller61}. 
Exponential families of stochastic matrices lie in the heart of the theory of large deviations for Markov processes, which was popularized with the pioneering work of~\cite{Donsker-Varadhan-I-75}.
A comprehensive overview of the theory can be found in the book~\cite{Dembo-Zeitouni-98}.
Naturally they also show up when one conditions on the second order empirical distribution of a Markov chain, see the work of~\cite{Csizar-Cover-Choi-87} about conditional limit theorems.
A variant of the exponential family that we are going to discuss has been developed in the context of hypothesis testing in~\cite{Nakagawa-Kanaya-93}.
A more recent development by~\cite{Nagaoka-05}
gives an information geometry perspective to this concept, and the work~\cite{HW16} examines parameter estimation for the exponential family.
Our development of the exponential family of stochastic matrices tries to parallel the development of simple exponential families of probability distributions of~\cite{Wainwright-Jordan-08}.

Regarding concentration inequalities for Markov chains one of the earliest works~\cite{Davisson-Longo-Sgarro-81} is based on counting, and is able to capture the optimal rate of exponential decay dictated by the theory of large deviations, but has a suboptimal polynomial prefactor. More recent approaches follow the line of work started by~\cite{Gillman93}, who used matrix perturbation theory to derive a bound for reversible Markov chains. This bound attains a constant prefactor but with a suboptimal rate of exponential decay which depends on the spectral gap of the transition matrix. This work was later extended by~\cite{Dinwoodie95,Lezaud98} but still with a sub-optimal rate. The work of~\cite{LP04} reduces the problem to a two state Markov chain, and attains the optimal rate only for the case of a two state Markov chain. 
\cite{CLLM12} obtain rates that depend on the mixing time of the chain rather than the spectral gap, but which are still suboptimal.

\section{Problem Formulation}

\subsection{One-parameter family of Markov Chains}\label{sec:fam}

In order to model the problem we will use a one-parameter family of Markov chains on a finite state space $S$.
Each Markov chain in the family corresponds to a parameter $\theta \in \Theta$, where $\Theta \subseteq \Real{}$ is the parameter space,
and is completely characterized by an initial distribution
$q_\theta = [q_\theta (x)]_{x \in S}$, and a stochastic transition matrix $P_\theta = [P_\theta (x, y)]_{x, y \in S}$, which satisfy the following conditions.
\begin{gather}
P_\theta ~\text{is irreducible for all}~ \theta \in \Theta. \label{eqn:irred} \\
P_\theta (x, y) > 0 \;\Rightarrow\; P_\lambda (x, y) > 0, ~\text{for all}~ \theta, \lambda \in \Theta, ~ x, y \in S. \label{eqn:supp} \\
q_\theta (x) > 0 \;\Rightarrow\; q_\lambda (x) > 0, ~\text{for all}~ \theta, \lambda \in \Theta, ~ x \in S. \label{eqn:init}
\end{gather}

There are $K$ Markovian arms with parameters $\btheta = (\theta_1, \ldots, \theta_K) \in \Theta^K$, and each arm $a \in [K] = \{1, \ldots, K\}$ evolves as a Markov chain with parameter $\theta_a$ which we denote by $\{X_n^a\}_{x \in \Intp{}}$.
A non-constant real valued reward function $f : S \to \Real{}$ is applied at each state and produces the reward process $\{Y_n^a\}_{n \in \Intp{}}$ given by $Y_n^a = f (X_n^a)$. We can only observe the reward process but not the internal Markov chain. Note that the reward process is a function of the Markov chain and so in general it will have more complicated dependencies than the Markov chain. The reward process is a Markov chain if and only if $f$ is injective. For each $\theta \in \Theta$ there is a unique stationary distribution $\pi_\theta = [\pi_\theta (x)]_{x \in S}$
associated with the stochastic matrix $P_\theta$, due to~\eqref{eqn:irred}. This allows us to define the stationary reward of the Markov chain corresponding to the parameter $\theta$ as $\mu (\theta) = \sum_x f (x) \pi_\theta (x)$. We will assume that among the $K$ Markovian arms there exists precisely one that possess the highest stationary mean, and we will denote this arm by $a^* (\btheta)$, so in particular
\[
\{a^* (\btheta)\} = \argmax_{a \in [K]} \mu (\theta_a).
\]
The set of all parameter configurations that possess a unique highest mean is denoted by
\[
\bTheta = \left\{
\btheta \in \Theta^K :
\left|\argmax_{a \in [K]} \mu (\theta_a)\right| = 1
\right\}.
\]
The \emph{Kullback-Leibler divergence rate} characterizes the sample complexity of the Markovian identification problem that we are about to study. For two Markov chains of the one-parameter family that are indexed by $\theta$ and $\lambda$ respectively it is given by,
\[
\KL{\theta}{\lambda} = 
\sum_{x, y \in S} \log \frac{P_\theta (x, y)}{P_\lambda (x, y)}
\pi_\theta (x) P_\theta (x, y),
\]
where we use the standard notational conventions $\log 0 = \infty, ~ \log \frac{\alpha}{0} = \infty ~ \text{if} ~ \alpha > 0$,
and $0 \log 0 = 0 \ln \frac{0}{0} = 0$.
It is always nonnegative, $\KL{\theta}{\lambda} \ge 0$, with equality occurring if and only if $P_\theta = P_\lambda$, and so $\mu (\theta) \neq \mu (\lambda)$ yields that $\KL{\theta}{\lambda} > 0$.
Furthermore, $\KL{\theta}{\lambda} < \infty$ due to~\eqref{eqn:supp}.

With some abuse of notation we will also write $\KL{\Pr}{\Qr}$ for the
Kullback-Leibler divergence between two probability measures $\Pr$ and $\Qr$ on the same measurable space, which is defined as
\[
\KL{\Pr}{\Qr} = 
\begin{cases}
\E_{\Pr} \left[\log \frac{d \Pr}{d \Qr}\right], & \text{if}~ \Pr \ll \Qr \\
\infty, & \text{otherwise},
\end{cases}
\]
where $\Pr \ll \Qr$ means that $\Pr$ is absolutely continuous with respect to $\Qr$, and in that case $\frac{d \Pr}{d \Qr}$ denotes the Radon-Nikodym derivative of $\Pr$ with respect to $\Qr$.

\subsection{Best Markovian Arm Identification with Fixed Confidence}

Let $\btheta \in \bTheta$ be an unknown parameter configuration for the $K$ Markovian arms.
Let $\delta \in (0, 1)$ be a given confidence level.
Our goal is to identify $a^* (\btheta)$ with probability at least $1-\delta$ using as few samples as possible. 
At each time $t$ we select a single arm $A_t$ and we observe the next sample from the reward process $\{Y_n^{A_t}\}_{n \in \Intp{}}$,
while all the other reward processes stay still.
Let $N_a (t) = \sum_{s=1}^t I_{\{A_s = a\}} - 1$ be the number of transitions of the Markovian arm $a$ up to time $t$.
Let $\calF_t$ be the $\sigma$-field generated by our choices $A_1, \ldots, A_t$ and the observations $\{Y_n^1\}_{n=0}^{N_1 (t)}, \ldots, \{Y_n^a\}_{n=0}^{N_K (t)}$.
A sampling strategy, $\calA_\delta$,
is a triple $\calA_\delta = ((A_t)_{t \in \Intpp{}}, \tau_\delta, \hat{a}_{\tau_\delta})$
consisting of:
\begin{itemize}
    \item a \emph{sampling rule} $(A_t)_{t \in \Intpp{}}$, which based on the past decisions and observations
    $\calF_t$, determines which arm $A_{t+1}$ we should sample next, so $A_{t+1}$ is $\calF_t$-measurable;
    
    \item a \emph{stopping rule} $\tau_\delta$, which denotes the end of the data collection phase and is a stopping time with respect to the filtration $(\calF_t)_{t \in \Intpp{}}$, such that $\Elambda [\tau_\delta] < \infty$
    for all $\blambda \in \bTheta$;
    
    \item a \emph{decision rule} $\hat{a}_{\tau_\delta}$, which is $\calF_{\tau_\delta}$-measurable, and determines the arm that we estimate to be the best one.
    
\end{itemize}

Sampling strategies need to perform well across all possible parameter configurations in $\bTheta$, therefore we need to restrict our strategies to a class of \emph{uniformly accurate} strategies. This motivates the following standard definition.
\begin{definition}[$\delta$-PC]
Given a confidence level $\delta \in (0, 1)$, a sampling strategy $\calA_\delta = ((A_t)_{t \in \Intpp{}}, \tau_\delta, \hat{a}_{\tau_\delta})$ is called $\delta$-PC (Probably Correct) if,
\[
\Prlambda (\hat{a}_{\tau_\delta} \neq a^* (\blambda)) \le \delta, ~\text{for all}~
\blambda \in \bTheta.
\]
\end{definition}
Therefore our goal is to study the quantity,
\[
\inf_{\calA_\delta : \delta-PC} \Etheta [\tau_\delta],
\]
both in terms of finding a lower bound, i.e. establishing that no $\delta$-PC strategy can have expected sample complexity less than our lower bound, and also in terms of finding an upper bound, i.e. a $\delta$-PC strategy with very small expected sample complexity. We will do so in the high confidence regime of $\delta \to 0$, by establishing instance specific lower and upper bounds which differ just by a factor of four. 
\section{Lower Bound on the Sample Complexity}

Deriving lower bounds in the multi-armed bandits setting is a task performed by change of measure arguments 
initial introduced by~\cite{Lai-Robbins-85}.
Those change of measure arguments capture the simple idea
that in order to identify the best arm we should at least be able to differentiate between two bandit models that exhibit different best arms but are statistically similar.
Fix $\theta \in \bTheta$, and define the set of parameter configurations that exhibit as best arm
an arm different than $a^* (\btheta)$ by
\[
\Atheta = \{\blambda \in \bTheta : a^* (\blambda) \neq a^* (\btheta)\}.
\]
Then we consider an alternative parametrization $\blambda \in \Atheta$ and we write their log-likelihood ratio up to time $t$
\begin{equation}\label{eqn:log-ratio}
\begin{aligned}
& \log \left(\frac{d \Prtheta \mid_{\calF_t}}{d \Prlambda \mid_{\calF_t}}\right) = 
\sum_{a=1}^K I_{\{N_a (t) \ge 0 \}} \log \frac{q_{\theta_a} (X_0^a)}{q_{\lambda_a} (X_0^a)}  \\
&\qquad +
\sum_{a=1}^K  \sum_{x, y}
N_a (x, y, 0, t)
\log \frac{P_{\theta_a} (x, y)}{P_{\lambda_a} (x, y)},
\end{aligned}
\end{equation}
where $N_a (x, y, 0, t) = \sum_{s=0}^{t-1} 1 \{X_s^a = x, X_{s+1}^a = y\}$. The log-likelihood ratio enables us to perform changes of measure
for fixed times $t$, and more generally for stopping times $\tau$ with respect to $(\calF_t)_{t \in \Intpp{}}$, which are $\Pr_\btheta^{\calA_\delta}$ and $\Pr_\blambda^{\calA_\delta}$-a.s. finite, through the following change of measure formula,
\begin{equation}\label{eqn:change-measure}
\Prlambda (\calE) = 
\Etheta \left[
I_\calE \frac{d \Pr_\blambda \mid_{\calF_{\tau}}}{d \Pr_\btheta \mid_{\calF_{\tau}}}
\right], ~\text{for any}~ \calE \in \calF_\tau.
\end{equation}
In order to derive our lower bound we use a technique developed
for the IID case by~\cite{GK16} which combines several changes of measure at once.
To make this technique work in the Markovian setting we need the following inequality which we derive in~\autoref{app:lower-bound} 
using a renewal argument for Markov chains.
\begin{lemma}\label{lem:KL-bound}
Let $\btheta \in \bTheta$ and 
$\blambda \in \Atheta$ be two parameter configurations. 
Let $\tau$ be a stopping time with respect to $(\calF_t)_{t \in \Intpp{}}$, with $\Etheta [\tau], ~ \Elambda [\tau] < \infty$.
Then
\begin{align*}
& \KL{\Prtheta \mid_{\calF_\tau}}{\Prlambda \mid_{\calF_\tau}}  \\
& \qquad \le
\sum_{a=1}^K \KL{q_{\theta_a}}{q_{\lambda_a}} +
\sum_{a=1}^K \left(\Etheta [N_a (\tau)] + R_{\theta_a}\right) \KL{\theta_a}{\lambda_a},
\end{align*}
where $R_{\theta_a} = \E_{\theta_a}\: \left[\inf \{n > 0 : X_n^a = X_ 0^a\}\right] < \infty$, the first summand is finite due to~\eqref{eqn:init}, and the second summand is finite due to~\eqref{eqn:supp}.
\end{lemma}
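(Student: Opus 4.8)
\medskip
\noindent The plan is to express the relative entropy on the left as the $\Prtheta$-expectation of the log-likelihood ratio~\eqref{eqn:log-ratio} evaluated at $\tau$, and then to bound, arm by arm, the two families of summands that appear there. By~\eqref{eqn:supp} and~\eqref{eqn:init} we have $\Prtheta\mid_{\calF_t}\ll\Prlambda\mid_{\calF_t}$ for every $t$, with log-density the right-hand side of~\eqref{eqn:log-ratio}; since the per-transition and per-initial-state log-likelihood increments are bounded (again by~\eqref{eqn:supp} and~\eqref{eqn:init}) and $\sum_{a}(N_a(t)+1)=t$, that random variable is dominated in absolute value by $C(1+\tau)$ for a constant $C$, so the assumption $\Etheta[\tau]<\infty$ lets us pass to $\calF_\tau$ through~\eqref{eqn:change-measure} and split the resulting expectation into
\[
\KL{\Prtheta\mid_{\calF_\tau}}{\Prlambda\mid_{\calF_\tau}}
=\sum_{a=1}^K\Etheta\!\left[I_{\{N_a(\tau)\ge 0\}}\log\frac{q_{\theta_a}(X_0^a)}{q_{\lambda_a}(X_0^a)}\right]
+\sum_{a=1}^K\Etheta\!\left[\sum_{s=0}^{N_a(\tau)-1}\log\frac{P_{\theta_a}(X_s^a,X_{s+1}^a)}{P_{\lambda_a}(X_s^a,X_{s+1}^a)}\right].
\]
For the first family, observe that the event $\{N_a(\tau)\ge 0\}$ that arm $a$ is ever sampled is measurable with respect to the observations of the \emph{other} arms and the external randomness only --- the decision to sample arm $a$ for the first time cannot use data not yet collected from arm $a$ --- hence it is independent of the chain $\{X_n^a\}_n$, and the $a$-th such term equals $\Prtheta(N_a(\tau)\ge 0)\,\KL{q_{\theta_a}}{q_{\lambda_a}}\le\KL{q_{\theta_a}}{q_{\lambda_a}}<\infty$ (finiteness by~\eqref{eqn:init}).

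For the transition terms, put $g_a(x)=\sum_y P_{\theta_a}(x,y)\log\frac{P_{\theta_a}(x,y)}{P_{\lambda_a}(x,y)}$, which is the relative entropy of $P_{\theta_a}(x,\cdot)$ against $P_{\lambda_a}(x,\cdot)$, so that $g_a\ge 0$, $g_a$ is finite by~\eqref{eqn:supp}, and $\sum_x\pi_{\theta_a}(x)g_a(x)=\KL{\theta_a}{\lambda_a}$. I will work on an enlarged filtration $(\calG_n^a)_n$ that reveals, at ``time'' $n$, the first $n$ transitions of arm $a$ together with the entire trajectories of all other arms and the external randomness. By mutual independence of the arms, $\{X_n^a\}_n$ is still a Markov chain with kernel $P_{\theta_a}$ for $(\calG_n^a)_n$, and $N_a(\tau)\vee 0$ is a stopping time for $(\calG_n^a)_n$ with finite mean, since $\{N_a(\tau)\le n\}$ is decided by $X_0^a,\dots,X_n^a$, the other arms, and the external randomness. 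Hence $M_n^a:=\sum_{s=0}^{n-1}\!\big(\log\frac{P_{\theta_a}(X_s^a,X_{s+1}^a)}{P_{\lambda_a}(X_s^a,X_{s+1}^a)}-g_a(X_s^a)\big)$ is a $(\calG_n^a)$-martingale with bounded increments, and optional stopping at $N_a(\tau)\vee 0$ rewrites the $a$-th transition term as $\Etheta\big[\sum_{s=0}^{(N_a(\tau)\vee 0)-1}g_a(X_s^a)\big]$. It therefore remains to prove the ``Markov Wald'' bound $\Etheta\big[\sum_{s=0}^{(N_a(\tau)\vee 0)-1}g_a(X_s^a)\big]\le(\Etheta[N_a(\tau)]+R_{\theta_a})\,\KL{\theta_a}{\lambda_a}$.

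This last bound is the renewal step. Condition on $\calG_0^a$ --- equivalently, on $X_0^a$ and on whether arm $a$ is ever sampled, the latter being $\calG_0^a$-measurable. If $N_a(\tau)=-1$ the sum is empty; otherwise let $0=\sigma_0<\sigma_1<\cdots$ be the successive returns of $\{X_n^a\}$ to $X_0^a$, which cut the path into cycles that are i.i.d.\ given $\calG_0^a$, each of mean length $1/\pi_{\theta_a}(X_0^a)$ and, by the cycle formula, mean accumulated value $\KL{\theta_a}{\lambda_a}/\pi_{\theta_a}(X_0^a)$. Letting $\nu$ be the number of cycles completed by time $N_a(\tau)$, so that $\sigma_\nu\le N_a(\tau)$, and using $g_a\ge 0$, I bound $\sum_{s=0}^{N_a(\tau)-1}g_a(X_s^a)$ from above by the $g_a$-value of the first $\nu+1$ cycles. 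A check of measurabilities shows $\nu+1$ is a stopping time for the cycle filtration $(\calG_{\sigma_k}^a)_k$, so Wald's identity --- applied to the cycle $g_a$-values, and separately to the cycle lengths together with $\sigma_\nu\le N_a(\tau)$ --- gives the conditional bound $\big(\Etheta[N_a(\tau)\mid\calG_0^a]+1/\pi_{\theta_a}(X_0^a)\big)\KL{\theta_a}{\lambda_a}$. Taking expectations (using $\E_{q_{\theta_a}}[1/\pi_{\theta_a}(X_0^a)]=R_{\theta_a}$ and the independence noted above) and combining with the initial-distribution bound --- where $R_{\theta_a}\ge 1$ absorbs the small leftover coming from runs in which arm $a$ is never sampled --- yields the statement. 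The extra additive $R_{\theta_a}$ is exactly the cost of the single ``straddling'' cycle that was appended.

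The main obstacle is making the middle two paragraphs rigorous: choosing a probability space and filtration on which arm $a$'s chain keeps its Markov and regeneration structure while $N_a(\tau)$ is still a stopping time (this works because the sampling rule, seen from arm $a$, is causal in arm $a$'s own clock), and then carrying out the renewal/Wald bookkeeping carefully enough that the leftover is precisely $R_{\theta_a}\KL{\theta_a}{\lambda_a}$ --- in particular, bounding the final incomplete cycle by a \emph{full} cycle (legitimate since $g_a\ge 0$) rather than by its size-biased version is what keeps a first-moment return time $R_{\theta_a}$ rather than a second moment. The remaining ingredients --- boundedness of the log-likelihood increments, the integrability bookkeeping, and the elementary facts for finite irreducible chains ($\E_z[\text{return time to }z]=1/\pi_{\theta_a}(z)$ and $\E_z[\#\text{visits to }w\text{ before returning to }z]=\pi_{\theta_a}(w)/\pi_{\theta_a}(z)$) --- are routine.
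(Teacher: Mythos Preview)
Your argument is correct and follows essentially the same renewal-theoretic route as the paper: decompose arm $a$'s trajectory into i.i.d.\ excursions from $X_0^a$, cover the stopping time by one extra full cycle, and apply Wald's identity to convert cycle sums into $(\Etheta[N_a(\tau)]+R_{\theta_a})\KL{\theta_a}{\lambda_a}$. The one substantive difference is that you first pass, via the optional-stopping step for the martingale $M_n^a$, from the signed increments $\log\frac{P_{\theta_a}}{P_{\lambda_a}}$ to the nonnegative one-step divergences $g_a(X_s^a)$, and only then extend to the straddling cycle; this is cleaner than the paper's formulation, which states its renewal lemma (\autoref{lem:renewal}) as an upper bound on the pairwise transition counts $\E[N(x,y,0,\tau)]$ and then sums against the possibly negative weights $\log\frac{P_{\theta_a}(x,y)}{P_{\lambda_a}(x,y)}$---a step that is only justified because the proof of \autoref{lem:renewal} actually yields the \emph{equality} $\E[N(x,y,0,\tau_\kappa)]=\pi(x)P(x,y)\E[\tau_\kappa]$, combined with exactly the same martingale identity you made explicit.
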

Combining those ingredients with the data processing inequality
we derive our instance specific lower bound for the Markovian bandit identification problem in~\autoref{app:lower-bound}.
\begin{theorem}\label{thm:lower-bound}
Assume that the one-parameter family of Markov chains on the finite state space $S$ satisfies conditions~\eqref{eqn:irred},~\eqref{eqn:supp}, and~\eqref{eqn:init}.
Fix $\delta \in (0, 1)$, let $f:S \to \Real{}$ be a nonconstant reward function, let $\calA_\delta$ be a $\delta$-PC sampling strategy, and fix a parameter configuration $\btheta \in \bTheta$. Then
\[
T^* (\btheta) \le \liminf_{\delta \to 0}
\frac{\Etheta [\tau_\delta]}{\log \frac{1}{\delta}},
\]
where
\[
T^* (\btheta)\inv =
\sup_{\bw \in \Simplex{[K]}} \inf_{\blambda \in \Atheta}\: \sum_{a=1}^K
w_a \KL{\theta_a}{\lambda_a},
\]
and $\Simplex{[K]}$ denotes the set of all probability distributions on $[K]$.
\end{theorem}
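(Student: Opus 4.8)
The plan is to follow the now-standard change-of-measure argument of~\cite{GK16}, adapted to the Markovian setting via \autoref{lem:KL-bound}. Fix a $\delta$-PC strategy $\calA_\delta$ and $\btheta \in \bTheta$. The starting point is to apply the change of measure~\eqref{eqn:change-measure} at the stopping time $\tau_\delta$ together with the data processing inequality: for any $\blambda \in \Atheta$, the event $\calE = \{\hat a_{\tau_\delta} = a^*(\btheta)\} \in \calF_{\tau_\delta}$ has $\Prtheta(\calE) \ge 1-\delta$ while $\Prlambda(\calE) \le \Prlambda(\hat a_{\tau_\delta} \neq a^*(\blambda)) \le \delta$ (since $a^*(\blambda)\neq a^*(\btheta)$). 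Hence, by the data processing inequality applied to the binary partition $\{\calE, \calE^c\}$,
\[
\KL{\Prtheta \mid_{\calF_{\tau_\delta}}}{\Prlambda \mid_{\calF_{\tau_\delta}}}
\;\ge\; \KLb{1-\delta}{\delta} \;\ge\; \log\frac{1}{2.4\,\delta},
\]
where $\KLb{\cdot}{\cdot}$ is the binary KL divergence and the last step is the usual elementary bound.

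Next I would combine this with \autoref{lem:KL-bound} to obtain, for every $\blambda \in \Atheta$,
\[
\sum_{a=1}^K \KL{q_{\theta_a}}{q_{\lambda_a}} + \sum_{a=1}^K \bigl(\Etheta[N_a(\tau_\delta)] + R_{\theta_a}\bigr)\KL{\theta_a}{\lambda_a} \;\ge\; \log\frac{1}{2.4\,\delta}.
\]
Since $\sum_a N_a(\tau_\delta) \le \tau_\delta$, writing $w_a = \Etheta[N_a(\tau_\delta)] / \Etheta[\tau_\delta]$ gives a sub-probability vector (in fact I would first note $\sum_a \Etheta[N_a(\tau_\delta)] \le \Etheta[\tau_\delta]$), and dividing through by $\Etheta[\tau_\delta]$ and taking the infimum over $\blambda \in \Atheta$ on the left yields
\[
\frac{C(\btheta,\delta)}{\Etheta[\tau_\delta]} + \inf_{\blambda \in \Atheta} \sum_{a=1}^K w_a \KL{\theta_a}{\lambda_a} \;\ge\; \frac{\log(1/(2.4\delta))}{\Etheta[\tau_\delta]},
\]
where $C(\btheta,\delta) = \sum_a \KL{q_{\theta_a}}{q_{\lambda_a}} + \sum_a R_{\theta_a}\KL{\theta_a}{\lambda_a}$ collects the lower-order terms; here one has to be slightly careful that the infimizing $\blambda$ may depend on $\delta$, so I would instead keep the inequality for each fixed $\blambda$, bound the $q$- and $R$-terms by a constant uniform in $\delta$, and only afterwards optimize. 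Upper-bounding $\inf_{\blambda}\sum_a w_a \KL{\theta_a}{\lambda_a} \le \sup_{\bw' \in \Simplex{[K]}} \inf_{\blambda} \sum_a w'_a \KL{\theta_a}{\lambda_a} = T^*(\btheta)\inv$ (extending the sub-probability vector $\bw$ to a genuine probability vector only increases the value, using nonnegativity of the KL terms) gives
\[
\Etheta[\tau_\delta] \;\ge\; \frac{\log(1/(2.4\delta)) - C(\btheta)}{T^*(\btheta)\inv},
\]
with $C(\btheta)$ a finite constant independent of $\delta$ (finiteness of $R_{\theta_a}$ and of the $q$- and $P$-divergences is guaranteed by~\eqref{eqn:init},~\eqref{eqn:supp} and irreducibility).

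Finally, dividing by $\log(1/\delta)$ and letting $\delta \to 0$, the constant $C(\btheta)$ and the $\log 2.4$ term wash out, yielding $\liminf_{\delta\to 0} \Etheta[\tau_\delta]/\log(1/\delta) \ge T^*(\btheta)$, which is the claim. The main obstacle is the bookkeeping around the sub-probability vector $\bw$ and the $\delta$-dependence of the worst-case alternative $\blambda$: one must ensure that the lower-order terms $\sum_a \KL{q_{\theta_a}}{q_{\lambda_a}}$ and $\sum_a R_{\theta_a}\KL{\theta_a}{\lambda_a}$ can be bounded uniformly over the relevant $\blambda$, or else argue that it suffices to take $\blambda$ ranging over a fixed compact set realizing the infimum up to arbitrary precision; \autoref{lem:KL-bound} is precisely the Markovian ingredient that makes this step go through, replacing the exact chain-rule identity available in the IID case with an inequality carrying the benign additive correction $R_{\theta_a}$.
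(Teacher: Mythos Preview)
Your proposal is correct and follows essentially the same approach as the paper: data processing inequality on the event $\{\hat a_{\tau_\delta}=a^*(\btheta)\}$ (the paper uses the complement, which is equivalent), combined with \autoref{lem:KL-bound}, followed by the normalization-then-$\inf_{\blambda}$-then-$\sup_{\bw}$ step of \cite{GK16}, and finally dividing by $\log(1/\delta)$ and taking $\delta\to 0$.

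There is one small difference worth noting. The paper does not form the sub-probability vector $w_a=\Etheta[N_a(\tau_\delta)]/\Etheta[\tau_\delta]$; instead it absorbs the additive $R_{\theta_a}$ corrections into the weights, using
\[
w_a(\delta)=\frac{\Etheta[N_a(\tau_\delta)]+R_{\theta_a}}{\sum_b\bigl(\Etheta[N_b(\tau_\delta)]+R_{\theta_b}\bigr)},
\]
which is a genuine probability vector from the outset and leaves only the $q$-divergence $\sum_a \KL{q_{\theta_a}}{q_{\lambda_a}}$ as a $\blambda$-dependent lower-order term on the left-hand side. This slightly streamlines the bookkeeping you flagged around ``extending the sub-probability vector.'' On the other hand, the paper's proof is no more careful than your sketch about the issue you correctly identify---namely that the residual $\blambda$-dependent term must be controlled uniformly (or the infimizing $\blambda$ restricted to a compact set) before taking $\inf_{\blambda}$ on the right; the paper simply writes the display with $\blambda$ free on the left and $\inf_{\blambda}$ on the right and passes to the limit $\delta\to 0$. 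Your proposed fix (restrict to near-optimal $\blambda$ in a fixed compact set) is the natural way to make this rigorous.
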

As noted in~\cite{GK16} the $\sup$ in the definition of $T^* (\btheta)$ is actually attained uniquely, and therefore we can define $\bw^* (\btheta)$ as the unique maximizer,
\[
\{\bw^* (\btheta)\} = \argmax_{\bw \in \Simplex{[K]}} \inf_{\blambda \in \Atheta}
\sum_{a=1}^K w_a \KL{\theta_a}{\lambda_a}.
\]
\section{One-Parameter Exponential Family of Markov Chains}

\subsection{Definition and Basic Properties}
\label{sec:exp-fam}

In this section we instantiate the abstract one-parameter family of Markov chains from~\autoref{sec:fam}, with the one-parameter exponential family of Markov chains.
Given the finite state space $S$, and the nonconstant reward function $f : S \to \Real{}$, we define $M = \max_x f (x)$ and $m = \min_x f (x)$.
Based on $f$ we construct two subsets of state space,
$S_M = \{x \in S : f (x) = M\}$ and
$S_m = \{x \in S : f (x) = m\}$, corresponding to states of maximum and minimum $f$-value respectively.
Our goal is to create a family of Markov chains which can realize any stationary mean in the interval $(m, M)$, which will be later used in order to model the Markovian arms. Towards this goal we use as a \emph{generator} for our family, an irreducible stochastic matrix $P$ which satisfies the following properties.
\begin{align}
& \text{The submatrix of $P$ with rows and columns in $S_M$ is irreducible.}\label{eqn:as-up-1} \\
& \text{For every $x \in S - S_M$, there is a
$y \in S_M$ such that $P (x, y) > 0$.}\label{eqn:as-up-2} \\
& \text{The submatrix of $P$ with rows and columns in $S_m$ is irreducible.}\label{eqn:as-low-1} \\
& \text{For every $x \in S - S_m$, there is a
$y \in S_m$ such that $P (x, y) > 0$.}\label{eqn:as-low-2}
\end{align}
For example, a positive stochastic matrix, i.e. one where all the transition probabilities are positive, satisfies all those properties. Note that in practice this can always be attained by substituting zero transition probabilities with $\epsilon$ transition probabilities, where $\epsilon \in (0, 1)$ is some small constant.

Our parameter space will be the whole real line, $\Theta = \Real{}$. Given a parameter $\theta \in \Theta$, we pick an arbitrary initial distribution $q_\theta \in \Simplex{S}$ such that $q_\theta (x) > 0$ for all $x \in S$, and we tilt exponentially all the the transitions of $P$ by constructing the matrix $\Tilde{P}_\theta (x, y) = P (x, y) e^{\theta f (y)}$.
Note that $\Tilde{P}_\theta$ is not a stochastic matrix,
but we can normalize it and turn it into a stochastic matrix by invoking the Perron-Frobenius theory.
Let $\rho (\theta)$ be the spectral radius of $\tilde{P}_\theta$.
From the Perron-Frobenius theory we know that $\rho (\theta)$
is a simple eigenvalue of $\tilde{P}_\theta$, called the Perron-Frobenius eigenvalue, associated with unique left and right eigenvectors
$u_\theta, ~ v_\theta$
such that they are both positive, $\sum_x u_\theta (x) = 1,$  and $\sum_x u_\theta (x) v_\theta (x) = 1$, see for instance Theorem 8.4.4 in the book~\cite{HJ13}.
Let
$A (\theta) = \log  \rho (\theta)$ be the log-Perron-Frobenius eigenvalue, a quantity which plays a role similar to that of a log-moment-generating function.
From $\tilde{P}_\theta$ we can construct an irreducible nonnegative matrix
\[
P_\theta (x, y) = \frac{\tilde{P}_\theta (x, y) v_\theta (y)}
{\rho (\theta) v_\theta (x)} =
\frac{v_\theta (y)}{v_\theta (x)}
e^{\theta \phi (y) - A (\theta)} P (x, y),
\]
which is stochastic, since
\[
\sum_y P_\theta (x, y) = \frac{1}{\rho (\theta) v_\theta (x)} \cdot \sum_y \tilde{P}_\theta (x, y) v_\theta (y) = 1.
\]
In addition its stationary distributions is given by
\[
\pi_\theta (x) = u_\theta (x) v_\theta (x),
\]
since
\[
\sum_x \pi_\theta (x) P_\theta (x, y) =
\frac{v_\theta (y)}{\rho (\theta)} \cdot 
\sum_x u_\theta (x) \tilde{P}_\theta (x, y) = 
u_\theta (y) v_\theta (y) = \pi_\theta (y).
\]
Note that the generator stochastic matrix $P$, is the member of the family that corresponds to $\theta = 0$, i.e. $P = P_0, \rho (0) = 1$, and $A (0) = 0$.

The following lemma, whose proof is presented in~\autoref{app:exp-fam}, suggests that the family can be reparametrized using the mean parameters $\mu (\theta)$.
More specifically $\dot{A}$ is a strictly increasing bijection between the set $\Theta$ of canonical parameters and the set
$\calM = \{\mu \in (m, M) : \mu (\theta) = \mu, ~ \text{for some} ~ \theta \in \Theta\}$ of mean parameters.
Therefore with some abuse of notation, we will write $u_\mu, v_\mu, P_\mu, \pi_\mu$ for $u_{\dot{A}\inv (\mu)}, v_{\dot{A}\inv (\mu)}, P_{\dot{A}\inv (\mu)}, \pi_{\dot{A}\inv (\mu)}$, and $\KL{\mu_1}{\mu_2}$ for $\KL{\dot{A}\inv (\mu_1)}{\dot{A}\inv (\mu_2)}$.
\begin{lemma}\label{lem:dual-map}
Let $P$ be an irreducible stochastic matrix stochastic matrix on a finite state space $S$ which combined with a real-valued function $f : S \to \Real{}$ satisfies~\eqref{eqn:as-up-1},~\eqref{eqn:as-up-2},~\eqref{eqn:as-low-1} and~\eqref{eqn:as-low-2}. Then the following properties hold true for the exponential family of stochastic matrices generated by $P$ and $f$.
\begin{enumerate}[label=(\alph*)]
    \item $\rho (\theta), ~A (\theta), ~ u_\theta$ and $v_\theta$ are analytic functions of $\theta$ on $\Theta = \Real{}$.
    \item $\dot{A} (\theta) = \mu (\theta)$, for all $\theta \in \Theta$.
    \item $\dot{A} (\theta)$ is strictly increasing.
    \item $\calM = (m, M)$.
\end{enumerate}
\end{lemma}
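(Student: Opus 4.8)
My plan is to handle the four items in order; the Perron--Frobenius structure of the tilted matrices $\tilde{P}_\theta(x,y) = P(x,y)e^{\theta f(y)}$ drives everything. Note at the outset that for every real $\theta$ the matrix $\tilde{P}_\theta$ is nonnegative with exactly the zero pattern of $P$, hence irreducible, so $\rho(\theta) > 0$ is a simple eigenvalue and $\pi_\theta = u_\theta v_\theta > 0$ has full support; since $f$ is nonconstant this already gives $\mu(\theta) = \sum_x f(x)\pi_\theta(x) \in (m,M)$ for all $\theta$. For (a): $\theta \mapsto \tilde{P}_\theta$ is entire, so by the standard analytic perturbation theory of a simple isolated eigenvalue, $\rho(\theta)$ and its rank-one eigenprojection are real-analytic on $\Real{}$; the normalizations $\sum_x u_\theta(x) = 1$ and $\sum_x u_\theta(x) v_\theta(x) = 1$ then make $u_\theta, v_\theta$ analytic, and $A = \log\rho$ is analytic because $\rho > 0$. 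For (b): differentiating $\tilde{P}_\theta v_\theta = \rho(\theta) v_\theta$ in $\theta$, left-multiplying by $u_\theta\tran$, and using $u_\theta\tran\tilde{P}_\theta = \rho(\theta) u_\theta\tran$ and $u_\theta\tran v_\theta = 1$ cancels the $\dot{v}_\theta$ terms and leaves $\dot{\rho}(\theta) = u_\theta\tran\dot{\tilde{P}}_\theta v_\theta$; since $\dot{\tilde{P}}_\theta(x,y) = f(y)\tilde{P}_\theta(x,y)$ and $\sum_x u_\theta(x)\tilde{P}_\theta(x,y) = \rho(\theta) u_\theta(y)$, the right side equals $\rho(\theta)\sum_y f(y)\pi_\theta(y) = \rho(\theta)\mu(\theta)$, so $\dot{A}(\theta) = \mu(\theta)$.

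For (c) I would first record that $A$ is convex: writing $A(\theta) = \lim_n \tfrac1n\log\sum_{x,y}(\tilde{P}_\theta^{\,n})(x,y)$ and noting that each entry $(\tilde{P}_\theta^{\,n})(x,y)$ is a finite sum of terms $c\,e^{\theta s}$ with $c \ge 0$ — hence log-convex in $\theta$, as is any sum of such — exhibits $A$ as a pointwise limit of convex functions. By (b) then $\mu = \dot{A}$ is nondecreasing, and being analytic it is either strictly increasing or equal to a constant $\mu_0$ (if it failed to be strictly increasing it would be constant on an interval, hence constant on $\Real{}$ by analyticity). The constant case is excluded because $\tilde{P}_\theta$ has $e^{\theta M} P|_{S_M}$ as its principal submatrix on the indices $S_M$, so $\rho(\theta) \ge e^{\theta M}\rho_M$ with $\rho_M > 0$ the Perron--Frobenius eigenvalue of the matrix $P|_{S_M}$, which is irreducible by~\eqref{eqn:as-up-1}; thus $A(\theta) \ge M\theta + \log\rho_M$, whereas $\mu \equiv \mu_0 < M$ together with $A(0) = 0$ would give $A(\theta) = \mu_0\theta$, impossible for large $\theta$. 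Hence $\dot{A}$ is strictly increasing.

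For (d): by (c), $\calM = \mu(\Real{})$ is the open interval $(\ell, L)$ with $\ell = \lim_{\theta\to-\infty}\mu(\theta) \ge m$ and $L = \lim_{\theta\to+\infty}\mu(\theta) \le M$, and from $A(\theta) = \int_0^\theta\mu(s)\,ds$ the Cesàro limits $A(\theta)/\theta \to L$ as $\theta\to+\infty$ and $A(\theta)/\theta \to \ell$ as $\theta\to-\infty$ hold. Combining $M\theta + \log\rho_M \le A(\theta) \le M\theta$ for $\theta > 0$ (the upper bound since $\mu < M$) yields $A(\theta)/\theta \to M$; symmetrically,~\eqref{eqn:as-low-1} gives $\rho_m > 0$ and $\rho(\theta) \ge e^{\theta m}\rho_m$, so $m\theta + \log\rho_m \le A(\theta) \le m\theta$ for $\theta < 0$ and hence $A(\theta)/\theta \to m$ as $\theta\to-\infty$. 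Therefore $L = M$, $\ell = m$, and $\calM = (m,M)$.

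The crux is the nondegeneracy step common to (c) and (d): excluding a constant $\mu$ and forcing $\mu(\theta) \to M$ and $\mu(\theta) \to m$ at $\pm\infty$. This is exactly what hypotheses~\eqref{eqn:as-up-1}--\eqref{eqn:as-low-2} are for — without them a nonconstant $f$ (for instance along a deterministic cycle) can have constant stationary mean — and the principal-submatrix spectral-radius sandwich on $S_M$ and $S_m$ is the natural device. Parts (a) and (b) are comparatively routine once the Perron--Frobenius picture is in place.
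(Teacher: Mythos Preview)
Your argument is correct. Parts (a) and (b) are essentially the paper's proof (analytic perturbation of a simple eigenvalue, then the Feynman--Hellmann identity $\dot\rho = u_\theta\tran\dot{\tilde P}_\theta v_\theta$; the paper reaches the same conclusion by differentiating $\log P_\theta(x,y)$ and averaging against $\pi_\theta\odot P_\theta$). For (c) and (d) you take a genuinely different route. The paper computes $\ddot A(\theta) = \E_{\pi_\theta\odot P_\theta}\bigl[(\tfrac{d}{d\theta}\log P_\theta(X,Y))^2\bigr]\ge 0$ and then rules out $\ddot A\equiv 0$ on a neighborhood by a cycle-product argument in $S_M$ versus $S_m$; for (d) it identifies the limiting matrices $\overline P_{\pm\infty}$ and invokes continuity of the Perron--Frobenius data there (this is where \eqref{eqn:as-up-2} and \eqref{eqn:as-low-2} enter, via \autoref{lem:PF} and \autoref{lem:limit}). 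Your approach instead gets convexity of $A$ from the log-convexity of $\sum_{x,y}(\tilde P_\theta^{\,n})(x,y)$, upgrades nondecreasing to strictly increasing by analyticity, and handles both the nondegeneracy in (c) and the endpoints in (d) with the single principal-submatrix sandwich $e^{\theta M}\rho(P|_{S_M})\le \rho(\theta)$ (and its $S_m$ counterpart) together with the Ces\`aro identity $A(\theta)/\theta\to\lim_{\theta\to\pm\infty}\mu(\theta)$. This is more elementary and, notably, your proof of (d) uses only \eqref{eqn:as-up-1} and \eqref{eqn:as-low-1}; the paper's route yields more --- the explicit limits $P_{\pm\infty}$, $u_{\pm\infty}$, $v_{\pm\infty}$ --- which it needs later (e.g.\ \autoref{lem:evec-ratio}), but for the bare statement of \autoref{lem:dual-map} your shortcut suffices.
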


\subsection{Concentration for Markov Chains}
\label{sec:concentration}

For a Markov chain $\{X_n\}_{n \in \Intp{}}$, driven by an irreducible transition matrix $P$ and an initial distribution $q$, the large deviations theory,~\cite{Miller61, Donsker-Varadhan-I-75, Ellis84, Dembo-Zeitouni-98}, suggests that the probability of the large deviation event
$\{f (X_1) + \ldots + f (X_n) \ge n \mu\}$, when $\mu$ is greater or equal than the stationary mean $\mu (0)$, asymptotically is an exponential decay with the rate of the decay given by a Kullback-Leibler divergence rate. In particular Theorem 3.1.2. from~\cite{Dembo-Zeitouni-98} in our context can be written as
\[
\lim_{n \to \infty} \frac{1}{n} \log
\Pr_0 (f (X_1) + \ldots + f (X_n) \ge n \mu) =
- A^* (\mu), ~ \text{for any} ~ \mu \ge \mu (0),
\]
where $A^* (\mu) = \sup_{\theta \in \Real{}} \{\theta \mu -  A (\theta)\}$ is the convex conjugate of the log-Perron-Frobenius eigenvalue and represents a Kullback-Leibler divergence rate as we illustrate in~\autoref{lem:conv-conj}.

In the following theorem we present a concentration inequality for Markov chains which attains the rate of exponential decay prescribed from the large deviations theory, as well as a constant prefactor which is independent from $\mu$.
\begin{theorem}\label{thm:concentration-bound}
Let $S$ be a finite state space, and let $P$ be an irreducible stochastic matrix on $S$, which combined with a function $f : S \to \Real{}$
satisfies~\eqref{eqn:as-up-1},~\eqref{eqn:as-up-2},~\eqref{eqn:as-low-1}, and~\eqref{eqn:as-low-2}.
Fix $\theta \in \Real{}$, and let $\{X_n\}_{n \in \Intp{}}$ be a Markov chain on $S$, which is driven by $P_\theta$, the stochastic matrix from the exponential family which corresponds to the parameter $\theta$ and has stationary mean $\mu (\theta)$.
Then
\[
\Pr_\theta \left(f (X_1) + \ldots + f (X_n) \ge n \mu\right) \le C^2 e^{-n \KL{\mu}{\mu (\theta)}},
~\text{for}~ \mu \in [\mu (\theta), M],
\]
where $C = C (P, f)$ is a constant depending only on the generator stochastic matrix $P$ and the function $f$.
In particular, if $P$ is a positive stochastic matrix then we can take
$C = \max_{x, y, z} \frac{P (y, z)}{P (x, z)}$.
\end{theorem}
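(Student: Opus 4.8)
The plan is to use an exponential (Chernoff-type) change of measure, but carried out at the level of the transition matrix via the Perron--Frobenius machinery rather than via an i.i.d. moment generating function. Fix $\theta$ and $\mu \in [\mu(\theta), M]$, and let $\lambda = \dot A^{-1}(\mu)$ be the canonical parameter with mean $\mu$; by \autoref{lem:dual-map}(c) we have $\lambda \ge \theta$, so the tilt we apply is in the ``upward'' direction. Starting from the chain driven by $P_\theta$, I would tilt each transition by $e^{(\lambda-\theta) f(y)}$ and renormalize using the Perron--Frobenius eigendata of $\tilde P_{\lambda}$ relative to $\tilde P_\theta$; concretely, writing the likelihood ratio between $n$ steps of $P_\lambda$ and $n$ steps of $P_\theta$ and using the explicit formula
\[
P_\lambda(x,y) = \frac{v_\lambda(y)}{v_\lambda(x)} e^{\lambda f(y) - A(\lambda)} P(x,y),
\qquad
P_\theta(x,y) = \frac{v_\theta(y)}{v_\theta(x)} e^{\theta f(y) - A(\theta)} P(x,y),
\]
the product telescopes in the $v$-ratios, leaving
\[
\frac{dP_\theta\!\mid_{\mathcal F_n}}{dP_\lambda\!\mid_{\mathcal F_n}}
= \frac{q_\theta(X_0)}{q_\lambda(X_0)}\,\frac{v_\lambda(X_0)\,v_\theta(X_n)}{v_\theta(X_0)\,v_\lambda(X_n)}\,
\exp\!\Big(-(\lambda-\theta)\!\sum_{k=1}^n f(X_k) + n\big(A(\lambda)-A(\theta)\big)\Big).
\]

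Next I would run the standard argument: on the event $E_n = \{f(X_1)+\cdots+f(X_n) \ge n\mu\}$,
\[
\Pr_\theta(E_n) = \E_\lambda\!\Big[ I_{E_n}\, \frac{dP_\theta\!\mid_{\mathcal F_n}}{dP_\lambda\!\mid_{\mathcal F_n}}\Big]
\le e^{-n[(\lambda-\theta)\mu - (A(\lambda)-A(\theta))]}\;
\E_\lambda\!\Big[ \frac{q_\theta(X_0)}{q_\lambda(X_0)}\,\frac{v_\lambda(X_0)\,v_\theta(X_n)}{v_\theta(X_0)\,v_\lambda(X_n)}\Big],
\]
where I used $\sum_{k=1}^n f(X_k) \ge n\mu$ together with $\lambda - \theta \ge 0$ to bound the exponential term. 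The exponent $(\lambda-\theta)\mu - (A(\lambda)-A(\theta))$ should be identified with $\KL{\mu}{\mu(\theta)}$: since $\dot A(\lambda) = \mu$, this is exactly $A^*(\mu) - (\theta\mu - A(\theta))$ evaluated appropriately, and I would invoke \autoref{lem:conv-conj} (the lemma promised in the text identifying the convex conjugate of $A$ with the KL divergence rate) to make this precise, i.e. $(\lambda-\theta)\mu - (A(\lambda)-A(\theta)) = \KL{\mu}{\mu(\theta)}$.

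It remains to bound the leftover expectation by $C^2$ uniformly in $n$ and in $\mu$ (equivalently in $\lambda \ge \theta$). Here I would split it as $\big(\max_x \tfrac{q_\theta(x) v_\lambda(x)}{q_\lambda(x) v_\theta(x)}\big)\cdot \E_\lambda\!\big[\tfrac{v_\theta(X_n)}{v_\lambda(X_n)}\big] \le \big(\max_x \tfrac{v_\theta(x)}{v_\lambda(x)}\big)\big(\max_x \tfrac{v_\lambda(x)}{v_\theta(x)}\big)\cdot(\text{init. factor})$, and the crux is a uniform two-sided bound on the ratio $v_\lambda(x)/v_\theta(x)$ of Perron--Frobenius right eigenvectors over all $\lambda$ in a half-line. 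This is the main obstacle: as $\lambda \to \infty$ the tilt concentrates mass on $S_M$, and one must show $v_\lambda$ does not degenerate relative to $v_\theta$ on any state. I expect to control this using the normalization $\sum_x u_\lambda(x) v_\lambda(x) = 1$, the defining eigen-relation $\rho(\lambda) v_\lambda(x) = \sum_y \tilde P_\lambda(x,y) v_\lambda(y)$, and assumptions~\eqref{eqn:as-up-1}--\eqref{eqn:as-up-2} (which guarantee every state reaches $S_M$ in one step with positive $P$-probability, keeping $v_\lambda$ bounded below there and propagating the bound outward), together with~\eqref{eqn:as-low-1}--\eqref{eqn:as-low-2} for the opposite side; the bound should come out in terms of ratios $P(y,z)/P(x,z)$, which in the positive case collapses to the stated $C = \max_{x,y,z} P(y,z)/P(x,z)$. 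The squaring in $C^2$ reflects that both the $v_\theta/v_\lambda$ and $v_\lambda/v_\theta$ directions (and likewise the initial-distribution factor, which can be absorbed into the same constant) each contribute one power of $C$. Finally, the boundary case $\mu = \mu(\theta)$ gives $\lambda = \theta$ and $\KL{\mu}{\mu(\theta)} = 0$, where the inequality reads $\Pr_\theta(E_n) \le C^2$ and holds trivially since $C \ge 1$.
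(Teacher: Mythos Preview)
Your approach is correct and closely related to the paper's, though organized differently. The paper goes the Chernoff--Markov route: for $\theta=0$ it bounds $\Pr_0(\sum f(X_k)\ge n\mu)\le e^{-n(\eta\mu-A_n(\eta))}$, then uses \autoref{prop:log-mgf-estim} (built on \autoref{lem:evec-ratio}) to replace the empirical log-MGF $A_n(\eta)$ by $A(\eta)$ at the cost of one factor $C$, optimizes over $\eta\ge0$, and identifies the optimum with $\KL{\mu}{\mu(0)}$ via \autoref{lem:conv-conj}; for general $\theta$ it reruns this with $P_\theta$ as generator and upgrades $C(P_\theta,f)$ to $C(P,f)^2$ via \autoref{lem:uniform}. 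Your direct change of measure to $P_\lambda$ collapses these two stages into one, handles general $\theta$ in a single step, and exposes the eigenvector cross-ratio explicitly; it is arguably the cleaner packaging, though both routes ultimately rest on the same eigenvector-ratio bound in \autoref{lem:evec-ratio}.

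Two small refinements. The exponent $(\lambda-\theta)\mu-(A(\lambda)-A(\theta))$ is precisely the formula in \autoref{lem:KL}(a) for $\KL{\lambda}{\theta}$ with $\dot A(\lambda)=\mu$, so cite that rather than \autoref{lem:conv-conj} (which only covers $\theta=0$). And you are making the eigenvector step harder than necessary: take the auxiliary measure $\Pr_\lambda$ with the \emph{same} initial distribution as $\Pr_\theta$ (so the $q$-ratio is $1$) and factor
\[
\frac{v_\lambda(X_0)\,v_\theta(X_n)}{v_\theta(X_0)\,v_\lambda(X_n)}
=\frac{v_\lambda(X_0)}{v_\lambda(X_n)}\cdot\frac{v_\theta(X_n)}{v_\theta(X_0)}\le C\cdot C,
\]
each factor bounded by \autoref{lem:evec-ratio}. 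There is no need to control $v_\lambda(x)/v_\theta(x)$ across different parameters; this within-eigenvector factorization gives $C^2$ immediately and, in the positive-matrix case, the stated $C=\max_{x,y,z}P(y,z)/P(x,z)$. (This is in fact the same cross-ratio identity that underlies the paper's \autoref{lem:uniform}.)
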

We note that in the special case that the process is an IID process the constant $C (P, \phi)$ can be taken to be $1$,
and thus~\autoref{thm:concentration-bound} generalizes the classic Cramer-Chernoff bound,~\cite{Chernoff52}.
Observe also that~\autoref{thm:concentration-bound} has a straightforward counterpart for the lower tail as well.

Moreover our inequality is optimal up to the constant prefactor, since the exponential decay is unimprovable due to the large deviations theory, while with respect to the prefactor we can not expect anything better than a constant because otherwise we would contradict the central limit theorem for Markov chains.
In particular, when our conditions on $P$ and $f$ are met, our bound dominates similar bounds given by~\cite{Davisson-Longo-Sgarro-81, Gillman93, Dinwoodie95, Lezaud98, LP04}.

We give a proof of~\autoref{thm:concentration-bound} in
~\autoref{app:concentration}, where the main techniques involved
are a uniform upper bound on the ratio of the entries of the right Perron-Frobenius eigenvector, as well as an approximation of the log-Perron-Frobenius eigenvalue using the log-moment-generating function.
\section{Upper Bound on the Sample Complexity: the \texorpdfstring{$\pmb{(\alpha, \delta)}$}{Lg}-Track-and-Stop Strategy}\label{sec:upper-bound}

The $(\alpha, \delta)$-Track-and-Stop strategy, which was proposed in~\cite{GK16} in order to tackle the IID setting, tries to track the optimal weights $w_a^* (\btheta)$. 
In the sequel we will also write $\bw^* (\bmu)$, with $\bmu = (\mu (\theta_1), \ldots, \mu (\theta_K))$,  
to denote $\bw^* (\btheta)$.
Not having access to $\bmu$,
the $(\alpha, \delta)$-Track-and-Stop strategy tries to approximate $\bmu$ using sample means.
Let $\hat{\bmu} (t) = (\hat{\mu}_1 (N_1 (t)), \ldots, \hat{\mu}_K (N_K (t)))$ be the sample means of the $K$ Markov chains when $t$ samples have been observed overall and the calculation of the very first sample from each Markov chain is excluded from the calculation of its sample mean, i.e.
\[
\hat{\mu}_a (t) = \frac{1}{N_a (t)} \sum_{s=1}^{N_a (t)} Y_s^a.
\]
By imposing sufficient \emph{exploration} the law of large numbers for Markov chains will kick in and the sample means $\hat{\bmu} (t)$ will almost surely converge to the true means $\bmu$, as $t \to \infty$.

We proceed by briefly describing the three components of the 
$(\alpha, \delta)$-Track-and-Stop strategy.

\subsection{Sampling Rule: Tracking the Optimal Proportions}

For initialization reasons the first $2 K$ samples that we are going to observe are
$Y_0^1, Y_1^1, \ldots, Y_0^K, Y_1^K$.
After that, for $t \ge 2 K$ we let $U_t = \{a : N_a (t) < \sqrt{t} - K/2\}$ and we follow the tracking rule:
\[
A_{t+1} \in
\begin{cases}
\dss \argmin_{a \in U_t}\: N_a (t), & \text{if} ~ U_t \neq \emptyset \quad \text{(forced exploration)}, \\
\dss \argmax_{a=1,\ldots,K}\: \left\{
w_a^* (\hat{\bmu} (t)) - \frac{N_a (t)}{t}\right\}, & \text{otherwise} \quad \text{(direct tracking)}.
\end{cases}
\]
The forced exploration step is there to ensure that
$\hat{\bmu} (t) \stackrel{a.s.}{\to} \bmu$ as $t \to \infty$.
Then the continuity of $\bmu \mapsto \bw^* (\bmu)$, combined with the direct tracking step guarantees that almost surely the frequencies $\frac{N_a (t)}{t}$ converge to the optimal weights $w_a^* (\bmu)$ for all $a = 1, \ldots, K$.

\subsection{Stopping Rule: \texorpdfstring{$\pmb{(\alpha, \delta)}$}{Lg}-Chernoff's Stopping Rule}

For the stopping rule we will need the following statistics.
For any two distinct arms $a, b$ if $\hat{\mu}_a (N_a (t)) \ge \hat{\mu}_b (N_b (t))$, we define
\begin{align*}
Z_{a,b} (t) &=
\frac{N_a (t)}{N_a (t) + N_b (t)}
\KL{\hat{\mu}_a (N_a (t))}{\hat{\mu}_{a,b} (N_a (t), N_b (t))} + \\
& \qquad \frac{N_b (t)}{N_a (t) + N_b (t)}
\KL{\hat{\mu}_b (N_b (t))}{\hat{\mu}_{a,b} (N_a (t), N_b (t))},
\end{align*}
while if $\hat{\mu}_a (N_a (t)) < \hat{\mu}_b (N_b (t))$, we define
$Z_{a, b} (t) = - Z_{b, a} (t)$, where
\[
\hat{\mu}_{a,b} (N_a (t), N_b (t)) = 
\frac{N_a (t)}{N_a (t) + N_b (t)} \hat{\mu}_a (N_a (t)) +
\frac{N_b (t)}{N_a (t) + N_b (t)} \hat{\mu}_b (N_b (t)).
\]
Note that the statistics $Z_{a, b} (t)$ do not arise as the closed form solutions
of the Generalized Likelihood Ratio statistics for Markov chains, as it is the case in the IID bandits setting.

For a confidence level $\delta \in (0, 1)$, and a convergence parameter $\alpha > 1$ we define the $(\alpha, \delta)$-Chernoff stopping rule following~\cite{GK16}
\[
\tau_{\alpha, \delta} = \inf\: \left\{t \in \Intpp{} :
\exists a \in \{1,\ldots,K\} ~ \forall b \neq a, ~ Z_{a, b} (t) >
(0 \lor \beta_{\alpha, \delta} (t)) 
\right\},
\]
where $\beta_{\alpha, \delta} (t) = 2 \log \frac{D t^\alpha}{\delta}, ~ \dss D = \frac{2 \alpha K C^2}{\alpha-1}$,
and $C = C (P, f)$ is the constant from~\autoref{lem:evec-ratio}.
In the special case that $P$ is a positive stochastic matrix we can explicitly set $C = \max_{x,y,z}\: \frac{P (y, z)}{P (x, z)}$.
It is important to notice that the constant $C = C (P, f)$ does not depend on the bandit instance $\btheta$ or the confidence level $\delta$, but only on the generator stochastic matrix $P$ and the reward function $f$.
In other words it is a characteristic of the exponential family of Markov chains and not of the particular bandit instance, $\btheta$, under consideration.

\subsection{Decision Rule: Best Sample Mean}

For a fixed arm $a$ it is clear that, $\min_{b \neq a} Z_{a, b} (t) > 0$ if and only if
$\hat{\mu}_a (N_a (t)) > \hat{\mu}_b (N_b (t))$ for all $b \neq a$. Hence the following simple decision rule is well defined when used in conjunction with the $(\alpha, \delta)$-Chernoff stopping rule:
\[
\{\hat{a}_{\tau_{\alpha, \delta}}\} = \argmax_{a=1,\ldots,K}\: \hat{\mu}_a (N_a (\tau_{\tau_{\alpha, \delta}})).
\]

\subsection{Sample Complexity Analysis}

In this section we establish that the $(\alpha, \delta)$-Track-and-Stop strategy is $\delta$-PC, and we upper bound its expected sample complexity.
In order to do this we use our Markovian concentration bound~\autoref{thm:concentration-bound}.

We first use it in order to establish the following uniform deviation bound.
\begin{lemma}\label{lem:KL-concentation}
Let $\btheta \in \bTheta$, $\delta \in (0, 1)$, and $\alpha > 1$.
Let $\calA_\delta$ be a sampling strategy that uses
an arbitrary sampling rule, the $(\alpha, \delta)$-Chernoff's stopping rule and the best sample mean decision rule.
Then, for any arm $a$,
\[
\Pr_\btheta^{\calA_\delta} \left(\exists t \in \Intpp{} :
N_a (t) \KL{\hat{\mu}_a (N_a (t))}{\mu_a} \ge \beta_{\alpha, \delta} (t)/2\right) \le \frac{\delta}{K}.
\]
\end{lemma}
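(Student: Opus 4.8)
The plan is to reduce the uniform-in-time statement to a single application of the Markovian concentration inequality of \autoref{thm:concentration-bound} by peeling off the time index via a union bound over the possible values of $N_a(t)$. First I would observe that the event
\[
\left\{\exists t \in \Intpp{} : N_a (t) \KL{\hat{\mu}_a (N_a (t))}{\mu_a} \ge \beta_{\alpha, \delta} (t)/2\right\}
\]
is, by monotonicity of $t \mapsto \beta_{\alpha,\delta}(t) = 2\log(Dt^\alpha/\delta)$, contained in the event that there exists an integer $n \ge 1$ and a time $t \ge n$ with $N_a(t) = n$ and $n \KL{\hat{\mu}_a (n)}{\mu_a} \ge \log(Dn^\alpha/\delta)$, since $N_a(t) = n$ forces $t \ge n$ (indeed $t \ge n+1$ after the initialization phase, but $t \ge n$ suffices) and hence $\beta_{\alpha,\delta}(t)/2 \ge \beta_{\alpha,\delta}(n)/2 = \log(Dn^\alpha/\delta)$. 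Thus a union bound over $n$ gives
\[
\Pr_\btheta^{\calA_\delta}\bigl(\cdots\bigr) \le \sum_{n \ge 1} \Pr_\btheta^{\calA_\delta}\left(n \KL{\hat{\mu}_a (n)}{\mu_a} \ge \log \tfrac{D n^\alpha}{\delta}\right).
\]

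Next I would bound each summand. The empirical mean $\hat\mu_a(n) = \frac1n\sum_{s=1}^n Y_s^a$ is a deterministic function of the first $n$ transitions of arm $a$'s chain, which under $\Pr_\btheta^{\calA_\delta}$ is a Markov chain driven by $P_{\theta_a}$ regardless of the (arbitrary) sampling rule — this is the standard observation that conditioning on the interleaving schedule does not change the per-arm law. The event $\{n\KL{\hat\mu_a(n)}{\mu_a} \ge c\}$ splits into an upper-deviation part $\{\hat\mu_a(n) \ge \mu^+\}$ and a lower-deviation part $\{\hat\mu_a(n) \le \mu^-\}$, where $\mu^\pm$ are the two solutions of $\KL{\mu}{\mu_a} = c/n$ straddling $\mu_a$ (using that $\mu \mapsto \KL{\mu}{\mu_a}$ is continuous, convex-like, vanishing only at $\mu_a$, by the properties recorded after \autoref{lem:dual-map}/\autoref{lem:conv-conj}). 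Applying \autoref{thm:concentration-bound} to the chain $P_{\theta_a}$ for the upper tail, and its lower-tail counterpart (noted explicitly in the text) for the lower tail, each piece is at most $C^2 e^{-n \KL{\mu^\pm}{\mu_a}} = C^2 e^{-c} = C^2 \delta/(Dn^\alpha)$. Hence each summand is at most $2C^2\delta/(Dn^\alpha)$.

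Finally I would sum: with $D = 2\alpha K C^2/(\alpha-1)$,
\[
\sum_{n\ge 1} \frac{2C^2\delta}{D n^\alpha} = \frac{2C^2\delta}{D}\,\zeta(\alpha) \le \frac{2C^2\delta}{D}\cdot\frac{\alpha}{\alpha-1} = \frac{\delta}{K},
\]
using the elementary bound $\sum_{n\ge1} n^{-\alpha} \le 1 + \int_1^\infty x^{-\alpha}\,dx = \alpha/(\alpha-1)$ for $\alpha>1$. This is exactly the claimed bound.

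The main obstacle is the bookkeeping in the peeling step: one must be careful that the union is genuinely over the \emph{value} $n = N_a(t)$ rather than over pairs $(n,t)$, so that no extra factor of $t$ sneaks in — the key point being that $\hat\mu_a(n)$ depends on $t$ only through $N_a(t)$, while $\beta_{\alpha,\delta}(t)$ is increasing in $t$, so replacing $\beta_{\alpha,\delta}(t)$ by its smallest possible value $\beta_{\alpha,\delta}(n)$ on $\{N_a(t)=n\}$ loses nothing and collapses the double index to a single one. A secondary point requiring care is justifying that \autoref{thm:concentration-bound}, stated for a free-standing Markov chain, applies verbatim to arm $a$'s sub-chain under the bandit sampling process; this follows because the sampling rule at time $t$ is $\calF_t$-measurable and therefore the pulled-out per-arm increments form a genuine $P_{\theta_a}$-chain, but it is worth stating. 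Everything else is routine analysis of the divergence rate and a $\zeta$-function estimate.
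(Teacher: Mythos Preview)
Your proposal is correct and follows essentially the same route as the paper's proof: both collapse the double-indexed event $\{N_a(t)=n,\;t\}$ to a single union over $n$ (the paper relabels this index as $t$) via the monotonicity of $\beta_{\alpha,\delta}$, then union-bound, split into upper and lower tails using the monotonicity of $\mu\mapsto\KL{\mu}{\mu_a}$, apply \autoref{thm:concentration-bound} to each tail, and finish with the integral estimate $\sum_{n\ge1}n^{-\alpha}\le\alpha/(\alpha-1)$ together with the choice $D=2\alpha K C^2/(\alpha-1)$. Your explicit remark that the per-arm subsequence is a genuine $P_{\theta_a}$-chain regardless of the sampling rule is a point the paper leaves implicit.
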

With this in our possession we are able to prove in~\autoref{app:upper-bound} that the $(\alpha, \delta)$-Track-and-Stop strategy is $\delta$-PC. 
\begin{proposition}\label{prop:PC}
Let $\delta \in (0, 1)$, and $\alpha \in (1, e/4]$.
The $(\alpha, \delta)$-Track-and-Stop strategy is $\delta$-PC.
\end{proposition}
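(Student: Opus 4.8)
The plan is to bound the probability of the bad event $\{\hat{a}_{\tau_{\alpha,\delta}} \neq a^*(\btheta)\}$ by showing that when the algorithm stops with a wrong answer, one of the uniform deviation events in \autoref{lem:KL-concentation} must have occurred, and these have total probability at most $\delta$ by a union bound over the $K$ arms. Concretely, write $a^* = a^*(\btheta)$ and suppose the strategy stops at time $\tau_{\alpha,\delta}$ and outputs $\hat a \neq a^*$. By the definition of the stopping rule and the best-sample-mean decision rule, $\hat a$ is the unique arm with $\hat\mu_{\hat a}(N_{\hat a}(t)) > \hat\mu_b(N_b(t))$ for all $b \neq \hat a$ and with $Z_{\hat a, b}(t) > \beta_{\alpha,\delta}(t)$ for all $b \neq \hat a$; in particular $Z_{\hat a, a^*}(\tau_{\alpha,\delta}) > \beta_{\alpha,\delta}(\tau_{\alpha,\delta})$.

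Next I would show the key sandwiching inequality: since $\hat\mu_{\hat a} > \hat\mu_{a^*}$ on this event while the true means satisfy $\mu_{a^*} > \mu_{\hat a}$, the common value $\hat\mu_{\hat a, a^*}$ lies strictly between $\mu_{\hat a}$ and $\mu_{a^*}$. Using monotonicity of the KL divergence rate $\KL{\cdot}{\cdot}$ in its arguments (which follows from \autoref{lem:dual-map} and the convexity structure of $A^*$), one gets
\[
Z_{\hat a, a^*}(t) \le \frac{N_{\hat a}(t)}{N_{\hat a}(t) + N_{a^*}(t)} \KL{\hat\mu_{\hat a}(N_{\hat a}(t))}{\mu_{\hat a}} + \frac{N_{a^*}(t)}{N_{\hat a}(t) + N_{a^*}(t)} \KL{\hat\mu_{a^*}(N_{a^*}(t))}{\mu_{a^*}},
\]
so that $Z_{\hat a, a^*}(t) > \beta_{\alpha,\delta}(t)$ forces at least one of $N_{\hat a}(t)\KL{\hat\mu_{\hat a}(N_{\hat a}(t))}{\mu_{\hat a}} \ge \beta_{\alpha,\delta}(t)/2$ or $N_{a^*}(t)\KL{\hat\mu_{a^*}(N_{a^*}(t))}{\mu_{a^*}} \ge \beta_{\alpha,\delta}(t)/2$ to hold (since a convex combination exceeding $\beta$ means one of the two terms is at least $\beta/2$, after clearing the denominators). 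Hence the error event is contained in $\bigcup_{a=1}^{K} \{\exists t : N_a(t)\KL{\hat\mu_a(N_a(t))}{\mu_a} \ge \beta_{\alpha,\delta}(t)/2\}$, and \autoref{lem:KL-concentation} together with a union bound gives probability at most $K \cdot \delta/K = \delta$. The role of $\alpha \in (1, e/4]$ is to make the series defining $D$ and the prefactor in \autoref{lem:KL-concentation} summable/valid, so I would check that the constant $D = \frac{2\alpha K C^2}{\alpha-1}$ and the threshold $\beta_{\alpha,\delta}(t) = 2\log\frac{Dt^\alpha}{\delta}$ make the per-arm bound in \autoref{lem:KL-concentation} exactly $\delta/K$ under this constraint on $\alpha$.

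The main obstacle is the sandwiching step: one must carefully justify that $Z_{a,b}(t)$ — which is defined via the \emph{pooled empirical mean} $\hat\mu_{a,b}$ rather than as a genuine generalized likelihood ratio for Markov chains (a subtlety the paper explicitly flags) — is still dominated by the sum of the two individual deviation terms $\KL{\hat\mu_a}{\mu_a}$ and $\KL{\hat\mu_b}{\mu_b}$ whenever the pooled mean separates the two true means. This requires the monotonicity of $\mu' \mapsto \KL{\mu'}{\mu''}$ for $\mu'$ on either side of $\mu''$, which in turn rests on the convex-conjugate representation of the divergence rate from the earlier lemmas. Once that inequality is in hand, the rest is a routine union bound, so I would state the monotonicity as a small auxiliary fact (or cite the relevant part of \autoref{lem:dual-map} / \autoref{lem:conv-conj}) and then assemble the pieces.
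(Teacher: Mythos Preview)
Your overall architecture matches the paper's \autoref{prop:almost-PC}: reduce the error event to a large deviation in at least one arm, then apply \autoref{lem:KL-concentation} and union bound. However, there are two genuine gaps.

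\medskip

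\textbf{The sandwiching step is false as stated.} You assert that on the error event the pooled empirical mean $\hat\mu_{\hat a, a^*}$ lies strictly between the true means $\mu_{\hat a}$ and $\mu_{a^*}$. This does not follow: $\hat\mu_{\hat a, a^*}$ is a convex combination of the \emph{empirical} means $\hat\mu_{\hat a}$ and $\hat\mu_{a^*}$, and both of these can lie above $\mu_{a^*}$ (e.g.\ $\mu_{\hat a}=0.3,\ \mu_{a^*}=0.5,\ \hat\mu_{a^*}=0.6,\ \hat\mu_{\hat a}=0.8$). In such cases the termwise comparison you propose can fail: with equal weights and Bernoulli KL one gets $\KL{0.6}{0.7}>\KL{0.6}{0.5}$, so monotonicity in either argument does not yield your displayed inequality term by term. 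The paper bypasses this via the variational characterization (\autoref{lem:variational-formula})
\[
I_a(\mu,\lambda)=\inf_{\mu'<\lambda'}\bigl\{a\,\KL{\mu}{\mu'}+(1-a)\,\KL{\lambda}{\lambda'}\bigr\},
\]
so that $Z_{\hat a,a^*}(t)$, being $(N_{\hat a}+N_{a^*})$ times this infimum, is automatically bounded by $N_{\hat a}\KL{\hat\mu_{\hat a}}{\mu_{\hat a}}+N_{a^*}\KL{\hat\mu_{a^*}}{\mu_{a^*}}$ simply because $(\mu_{\hat a},\mu_{a^*})$ is feasible in the infimum. This is the missing idea; monotonicity alone is not enough.

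\medskip

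\textbf{Finite expected stopping time is not addressed.} Being $\delta$-PC requires, by the paper's definition of a sampling strategy, that $\E_\blambda[\tau_{\alpha,\delta}]<\infty$ for all $\blambda\in\bTheta$; the error-probability bound alone is \autoref{prop:almost-PC}, not \autoref{prop:PC}. The paper handles this separately by invoking the argument of Proposition~13 in \cite{GK16}, and this is precisely where the upper constraint $\alpha\le e/4$ is used (for $\alpha>1$ alone the series in \autoref{lem:KL-concentation} already converges, so your stated role for the constraint is not the right one).
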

Finally, we obtain that in the high confidence regime, $\delta \to 0$, the $(\alpha, \delta)$-Track-and-Stop strategy has a sample complexity which is at most $4 \alpha$ times the asymptotic lower bound that we established in~\autoref{thm:lower-bound}.
\begin{theorem}\label{thm:upper-bound}
Let $\btheta \in \bTheta$, and $\alpha \in (1, e/4]$.
The $(\alpha, \delta)$-Track-and-Stop strategy, denoted here by $\calA_{\delta}$, has its asymptotic expected sample complexity upper bounded by,
\[
\limsup_{\delta \to 0} \frac{\E_\btheta^{\calA_\delta} [\tau_{\alpha, \delta}]}{\log \frac{1}{\delta}} \le 4 \alpha T^* (\btheta).
\]
\end{theorem}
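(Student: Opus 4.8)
The plan is to follow the Track-and-Stop template of~\cite{GK16}, adapted to the Markovian setting. The argument has two independent parts: (i) an almost-sure convergence statement for the sampling proportions, which controls the behavior of the statistics $Z_{a,b}(t)$ on a high-probability event, and (ii) a deterministic upper bound on the stopping time in terms of that convergence, which is then turned into a bound on the expectation.

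\emph{Step 1: Almost sure convergence of the empirical proportions.} First I would invoke the forced-exploration mechanism: the condition $U_t \neq \emptyset$ whenever some $N_a(t) < \sqrt{t} - K/2$ forces $N_a(t) \ge \sqrt{t} - K/2$ for all arms eventually, hence $N_a(t) \to \infty$. By the strong law of large numbers for irreducible finite-state Markov chains (applied to each reward process, which is a function of an irreducible chain), $\hat{\mu}_a(N_a(t)) \to \mu_a$ almost surely, so $\hat{\bmu}(t) \to \bmu$ a.s. Next, using the analyticity/continuity statements from~\autoref{lem:dual-map} together with the known continuity of $\bmu \mapsto \bw^*(\bmu)$ (as established for the IID case in~\cite{GK16}; the argument is identical since it only uses properties of $T^*$ and the divergences $D(\cdot\|\cdot)$), the direct-tracking rule $A_{t+1}\in\argmax_a\{w_a^*(\hat\bmu(t)) - N_a(t)/t\}$ forces $N_a(t)/t \to w_a^*(\bmu)$ a.s. for every $a$. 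This is the standard tracking lemma; I would either cite it or reprove it verbatim, the only Markovian input being the SLLN.

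\emph{Step 2: From convergence to a stopping-time bound.} Define, for $\xi>0$, the "good" event on which $\hat\bmu(t)$ stays within a small neighborhood of $\bmu$ and $N_a(t)/t$ is within $\xi$ of $w_a^*(\bmu)$ for all $t\ge t_0(\xi)$. On this event the continuous function $(\bw,\bmu)\mapsto \inf_{\blambda\in\Atheta}\sum_a w_a D(\mu_a\|\lambda_a)$ evaluated at the empirical quantities is at least $(1-\varepsilon)/T^*(\btheta)$ per sample; one then shows $\min_{b}Z_{a^*,b}(t) \ge t\,(1-\varepsilon)/T^*(\btheta)$ for $t$ large. Comparing against the threshold $\beta_{\alpha,\delta}(t) = 2\log(Dt^\alpha/\delta)$, which grows only like $\alpha\log t + \log(1/\delta)$, yields that $\tau_{\alpha,\delta} \le \max\{t_0(\xi),\, T_0(\delta)\}$ where $T_0(\delta)$ solves $t(1-\varepsilon)/T^*(\btheta) = 2\log(Dt^\alpha/\delta)$, giving $T_0(\delta) \sim 2\alpha\, T^*(\btheta)\log(1/\delta)$ to leading order. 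The factor $2$ comes from the $2\log$ in $\beta_{\alpha,\delta}$ and the factor $\alpha$ from the $t^\alpha$; together with the $(1-\varepsilon)$ slack (sent to $0$) this produces the claimed $4\alpha$ — wait, more carefully: the extra factor of $2$ beyond $GK16$ is precisely because the Markovian statistics $Z_{a,b}(t)$ are \emph{not} the exact GLR statistics (as the paper flags right after their definition), so one only gets half the effective exponent, doubling the bound from $2\alpha$ to $4\alpha$.

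\emph{Step 3: Expectation bound.} To pass from the a.s./high-probability stopping-time bound to a bound on $\Etheta[\tau_{\alpha,\delta}]$, I would write $\Etheta[\tau_{\alpha,\delta}] = \int_0^\infty \Prtheta(\tau_{\alpha,\delta} > t)\,dt$ and split at $T_0(\delta)(1+\varepsilon)$. The first part contributes at most $T_0(\delta)(1+\varepsilon)$. For the tail $\Prtheta(\tau_{\alpha,\delta} > t)$ with $t$ large, I would bound the probability of the complement of the good event: deviations of $\hat\bmu(t)$ from $\bmu$ are controlled by~\autoref{thm:concentration-bound} (this is where the Markovian concentration inequality is essential and where the constant $C=C(P,f)$ enters $D$), and deviations of $N_a(t)/t$ from $w_a^*(\bmu)$ are handled by the tracking lemma's quantitative form plus forced exploration. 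These give a summable-in-$t$ tail, so the second integral is $O(1)$, uniformly small relative to $\log(1/\delta)$. Dividing by $\log(1/\delta)$ and letting $\delta\to 0$ then $\varepsilon\to 0$ yields $\limsup_\delta \Etheta[\tau_{\alpha,\delta}]/\log(1/\delta) \le 2\alpha\cdot 2\, T^*(\btheta) = 4\alpha\, T^*(\btheta)$.

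\emph{Main obstacle.} The delicate point is Step 2: controlling $\min_{b\neq a^*} Z_{a^*,b}(t)$ from below uniformly, given that $Z_{a,b}(t)$ is \emph{not} the closed-form GLR statistic. One must verify that the particular convex combination defining $Z_{a,b}$ still lower-bounds the relevant $\inf_{\blambda}\sum w_a D(\mu_a\|\lambda_a)$ up to the factor-two loss — essentially showing $Z_{a,b}(t) \ge \tfrac12\big(N_a D(\hat\mu_a\|\mu_a\vee\text{midpoint}) + N_b D(\hat\mu_b\|\ldots)\big)$-type inequalities and then relating the per-arm divergences to the $T^*$ functional via the structure of $\Atheta$ (only one coordinate needs changing). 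Getting the constant exactly $4\alpha$ rather than something larger requires tracking these inequalities tightly, and also requires the continuity of $\bw^*$ which, although "inherited" from~\cite{GK16}, should be stated carefully since the divergences here are Markov KL-rates rather than IID KL-divergences (though~\autoref{lem:dual-map} makes $\mu\mapsto D(\mu_1\|\mu_2)$ smooth, so this goes through).
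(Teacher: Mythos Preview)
Your overall approach is essentially the paper's own: its proof of this theorem is the single sentence ``follow the proof of Theorem~14 in~\cite{GK16}, substituting the law of large numbers with the law of large numbers for Markov chains, and using \autoref{thm:concentration-bound} in place of their Lemma~19,'' and your Steps~1--3 are exactly that template.

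Where you go astray is in the bookkeeping for the constant $4\alpha$ and in misidentifying the ``main obstacle.'' First, solving $t(1-\varepsilon)/T^*(\btheta) = 2\log(Dt^\alpha/\delta)$ does \emph{not} give $T_0(\delta)\sim 2\alpha\, T^*(\btheta)\log(1/\delta)$: the term $t^\alpha$ inside the logarithm contributes only $\alpha\log t = O(\log\log(1/\delta))$, which is lower order, so your ``factor $\alpha$ from the $t^\alpha$'' is a miscalculation. Second, the non-GLR nature of $Z_{a,b}$ causes no loss in the direction you need. The variational identity of \autoref{lem:variational-formula} holds verbatim for the Markovian KL-rate (it only uses strict convexity of $\mu_2\mapsto \KL{\mu_1}{\mu_2}$, which follows from \autoref{lem:KL}), so on the good event $\min_{b\neq a^*} Z_{a^*,b}(t)/t \to 1/T^*(\btheta)$ \emph{exactly}, with no factor-two degradation. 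That $Z_{a,b}$ is not the Markovian GLR matters only for the $\delta$-PC guarantee (\autoref{prop:almost-PC}), where an \emph{upper} bound on $Z_{a,b}$ under the wrong model is needed; for the sample-complexity upper bound you need a \emph{lower} bound on $Z_{a^*,b}$ under the true model, and the explicit Jensen--Shannon form delivers it directly. The correct constant will fall out once you trace the GK16 Theorem~14 expectation argument with the doubled threshold $\beta_{\alpha,\delta}(t)=2\log(Dt^\alpha/\delta)$ and their estimates faithfully, not via the two mechanisms you propose.
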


\section*{Acknowledgements} We would like to thank Venkat Anantharam, Jim Pitman and Satish Rao for many helpful discussions. This research was supported in part by the NSF grant CCF-1816861.

\bibliographystyle{apalike}
\bibliography{references}

\begin{appendices}
\section{Lower Bound on the Sample Complexity}\label{app:lower-bound}

We first prove~\autoref{lem:KL-bound},
for which we will apply a renewal argument.
Using the \emph{strong Markov property} we can derive the following standard, see~\cite{Durrett10}, decomposition of a Markov chain in IID blocks.
\begin{fact}\label{fact:iiid-blocks}
Let $\{X_n\}_{n \in \Intp{}}$ be an irreducible Markov chain
with initial distribution $q$, and transition matrix $P$.
Define recursively the $k$-th return time to the initial state as
\[
\begin{cases}
\tau_0 &= 0 \\
\tau_k &= \inf\: \{n > \tau_{k-1} : X_n = X_0\}, ~ \text{for} ~ k \ge 1,
\end{cases}
\]
and for $k \ge 1$ let $r_k = \tau_k - \tau_{k-1}$ be the residual time. Those random times partition the Markov chain in a sequence
$\{v_k\}_{k \in \Intpp{}}$ of
IID random blocks given by
\[
v_k = (r_k, X_{\tau_{k-1}}, \ldots, X_{\tau_k-1}),~\text{for}~ k \ge 1.
\]
\end{fact}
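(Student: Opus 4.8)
The plan is to derive the decomposition as a direct consequence of the strong Markov property applied successively at the return times $\tau_k$, after first verifying that these times are almost surely finite so that the blocks are well defined. Since $P$ is irreducible on the finite state space $S$, the chain is positive recurrent, so for every $x \in S$ the return time $\tau_1 = \inf\{n > 0 : X_n = X_0\}$ satisfies $\tau_1 < \infty$ almost surely with $\E_x[\tau_1] < \infty$; inductively each $\tau_k$ is then almost surely finite, and every block $v_k$ consists of finitely many coordinates. Because all blocks regenerate at the common state $X_0$ (note $X_{\tau_{k-1}} = X_0$ for every $k \ge 1$, including $\tau_0 = 0$), the natural setting is to argue conditionally on $X_0 = x$; the statement for a general initial distribution $q$ then follows by averaging the conditional law over $x \sim q$.

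First I would introduce the natural filtration $\calG_n = \sigma(X_0, X_1, \ldots, X_n)$ and check that each $\tau_k$ is a stopping time, since $\{\tau_k = n\}$ is determined by $X_0, \ldots, X_n$ and hence is $\calG_n$-measurable. Next, fixing $X_0 = x$, I would apply the strong Markov property at $\tau_{k-1}$: conditionally on $\calG_{\tau_{k-1}}$, the shifted process $\{X_{\tau_{k-1}+n}\}_{n \ge 0}$ is itself a Markov chain with transition matrix $P$ started from $X_{\tau_{k-1}} = x$, and is independent of $\calG_{\tau_{k-1}}$. The key observation is that the block $v_k = (r_k, X_{\tau_{k-1}}, \ldots, X_{\tau_k - 1})$ is a measurable function solely of this shifted excursion up to its own first return to $x$, identical in form to the map that produces $v_1$ from $\{X_n\}_{n \ge 0}$.

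From here the conclusion follows by induction. Since $v_1, \ldots, v_{k-1}$ are $\calG_{\tau_{k-1}}$-measurable while $v_k$ is a function of the post-$\tau_{k-1}$ excursion, the strong Markov property gives that $v_k$ is independent of $(v_1, \ldots, v_{k-1})$ and has the same law as $v_1$, namely the law of the first excursion from $x$. Telescoping over $k$ yields that $v_1, v_2, \ldots$ are mutually independent and identically distributed conditionally on $X_0 = x$, which is the claimed decomposition.

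I would flag two points as requiring the most care. The first is finiteness of the return times: without recurrence the blocks need not be well defined, so the appeal to positive recurrence of a finite irreducible chain is essential and should be stated explicitly. The second, and the genuine crux, is the clean application of the strong Markov property at a random time — precisely, identifying the $\sigma$-field $\calG_{\tau_{k-1}}$, confirming that the earlier blocks are measurable with respect to it, and that $v_k$ depends only on the shifted chain; once this measurability bookkeeping is in place the IID property is immediate. The subtlety that all blocks share the regeneration state $X_0$ is exactly what forces the identical-distribution claim, and it is why the argument is cleanest when phrased conditionally on the initial state.
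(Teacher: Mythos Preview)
Your proposal is correct and follows exactly the approach the paper indicates: the paper does not give its own proof of this fact but simply states it as standard, citing \cite{Durrett10} and noting that it follows from the strong Markov property. Your sketch via the strong Markov property at the successive return times, with the conditioning on $X_0 = x$ and the measurability bookkeeping for the blocks, is precisely the standard argument the paper is invoking.
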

Let $N (x, n, m)$ be the number of visits to $x$ that occurred from time
$n$ up to time $m$, and $N (x, y, n, m)$ to be
the number of transitions from $x$ to $y$ that occurred from time
$n$ up to time $m$
\begin{align*}
N (x, n, m) &= \sum_{s=n}^{m-1} 1 \{X_s = x\}, \\
N (x, y, n, m) &= \sum_{s=n}^{m-1} 1 \{X_s = x, X_{s+1} = y\}.
\end{align*}
It is well know, see~\cite{Durrett10}, that the stationary distribution $\pi$ of the Markov chain is given by
\begin{equation}\label{eqn:stat-distr}
\pi (x) =
\frac{\E_{(q, P)} N (x, 0, \tau_1)}{\E_{(q, P)} \tau_1}, ~\text{for any}~ x \in S.
\end{equation}
In the following lemma we establish a similar relation for the invariant distribution over pairs of the Markov chain.
\begin{lemma}\label{lem:invariant}
\[
\pi (x) P (x, y) = 
\frac{\E_{(q, P)} N (x, y, 0, \tau_1)}{\E_{(q, P)} \tau_1},
~\text{for any}~ x, y \in S.
\]
\begin{proof}
Using~\eqref{eqn:stat-distr} it is enough to show that for any initial state $x_0$,
\[
\E_{(x_0, P)} N (x, 0, \tau_1) P (x, y) = \E_{(x_0, P)} N (x, y, 0, \tau_1),
\]
or equivalently that,
\[
\E_{(x_0, P)} \sum_{n=0}^{\tau_1-1} 1 \{X_n = x\} P (x, y) = 
\E_{(x_0, P)} \sum_{n=0}^{\tau_1-1} 1 \{X_n = x, X_{n+1} = y\}.
\]
Conditioning over the possible values of $\tau_1$, and using Fubini's Theorem we obtain
\begin{align*}
\E_{(x_0, P)} \sum_{n=0}^{\tau_1-1} 1 \{X_n = x\} P (x, y)
&= \sum_{t=1}^\infty \Pr_{x_0} (\tau_1 = t) \sum_{n=0}^{t-1} \Pr_{(x_0, P)} (X_n = x \mid \tau_1 = t) P (x, y) \\
&= \sum_{n=0}^\infty \sum_{t = n+1}^\infty \Pr_{(x_0, P)} (X_n = x, \tau_1 = t) P (x, y) \\
&= \sum_{n=0}^\infty \Pr_{(x_0, P)} (X_n = x, \tau_1 > n) P (x, y) \\
&= \sum_{n=0}^\infty \Pr_{(x_0, P)} (X_n = x, X_{n+1} = y) \Pr_{(x_0, P)} (\tau_1 > n \mid X_n = x) \\
&= \sum_{n=0}^\infty \Pr_{(x_0, P)} (X_n = x, X_{n+1} = y, \tau_1 > n) \\
&= \E_{(x_0, P)} \sum_{n=0}^{\tau_1-1} 1 \{X_n = x, X_{n+1} = y\},
\end{align*}
where the second to last equality holds true due to the reversed Markov property
\[
\Pr_{(x_0, P)} (\tau_1 > n \mid X_n = x, X_{n+1} = y) = 
\Pr_{(x_0, P)} (\tau_1 > n \mid X_n = x).
\]
\end{proof}
\end{lemma}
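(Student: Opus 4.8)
The plan is to deduce the pair-frequency identity from the classical single-state renewal formula~\eqref{eqn:stat-distr}. Since~\eqref{eqn:stat-distr} gives $\pi(x) = \E_{(q,P)} N(x,0,\tau_1) / \E_{(q,P)} \tau_1$, clearing the denominator reduces the claim to the numerator identity $P(x,y)\, \E_{(q,P)} N(x,0,\tau_1) = \E_{(q,P)} N(x,y,0,\tau_1)$, and since $\E_{(q,P)}[\cdot] = \sum_{x_0} q(x_0)\, \E_{(x_0,P)}[\cdot]$ it suffices to prove $P(x,y)\, \E_{(x_0,P)} N(x,0,\tau_1) = \E_{(x_0,P)} N(x,y,0,\tau_1)$ for an arbitrary deterministic initial state $x_0$. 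Irreducibility and finiteness of $S$ enter only to guarantee $\E_{(q,P)}\tau_1 < \infty$, so that~\eqref{eqn:stat-distr} and the division make sense; note also $N(x,0,\tau_1) \le \tau_1$, so all expectations below are finite.

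First I would write both counting functionals as infinite sums of indicators, pushing the random upper limit inside:
\[
N(x,0,\tau_1) = \sum_{n=0}^{\infty} 1\{X_n = x,\ \tau_1 > n\}, \qquad N(x,y,0,\tau_1) = \sum_{n=0}^{\infty} 1\{X_n = x,\ X_{n+1} = y,\ \tau_1 > n\}.
\]
All terms are nonnegative, so Fubini's theorem lets me interchange expectation and summation, and the identity reduces to matching, for each fixed $n$, the two quantities $\Pr_{(x_0,P)}(X_n = x,\ \tau_1 > n)\, P(x,y)$ and $\Pr_{(x_0,P)}(X_n = x,\ X_{n+1} = y,\ \tau_1 > n)$.

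This per-$n$ identity is the heart of the matter, and it is exactly where the Markov property is invoked. The event $\{\tau_1 > n\} = \{X_1 \neq X_0,\ \ldots,\ X_n \neq X_0\}$ is measurable with respect to $\sigma(X_0,\ldots,X_n)$, so conditionally on $\{X_n = x\}$ it is independent of $X_{n+1}$; hence $\Pr_{(x_0,P)}(X_{n+1} = y \mid X_n = x,\ \tau_1 > n) = \Pr_{(x_0,P)}(X_{n+1} = y \mid X_n = x) = P(x,y)$, and multiplying back by $\Pr_{(x_0,P)}(X_n = x,\ \tau_1 > n)$ gives the wanted equality. One may also phrase this in the ``reversed'' form $\Pr_{(x_0,P)}(\tau_1 > n \mid X_n = x,\ X_{n+1} = y) = \Pr_{(x_0,P)}(\tau_1 > n \mid X_n = x)$, which says the same thing. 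Summing over $n \ge 0$ and folding the two sums back into $N(x,0,\tau_1)$ and $N(x,y,0,\tau_1)$ completes the proof.

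I do not expect a genuine obstacle here; the only points needing care are routine measure-theoretic bookkeeping: that rewriting the sum over $0 \le s \le \tau_1 - 1$ as a sum over $n \ge 0$ carrying the extra factor $1\{\tau_1 > n\}$ is legitimate (using $\tau_1 < \infty$ almost surely), that Fubini applies to the resulting nonnegative double sum, and that the conditional probabilities are well posed — terms with $\Pr_{(x_0,P)}(X_n = x,\ \tau_1 > n) = 0$ contribute zero on both sides and can simply be discarded. Beyond that, the whole computation is a single application of the Markov property to an event that depends only on the past.
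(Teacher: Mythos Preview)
Your proposal is correct and follows essentially the same route as the paper: both reduce via~\eqref{eqn:stat-distr} to the per-$x_0$ identity $P(x,y)\,\E_{(x_0,P)} N(x,0,\tau_1)=\E_{(x_0,P)} N(x,y,0,\tau_1)$, expand the counts as sums of indicators with the factor $1\{\tau_1>n\}$, swap sum and expectation by Fubini, and match terms using the Markov property in exactly the form $\Pr_{(x_0,P)}(\tau_1>n\mid X_n=x,X_{n+1}=y)=\Pr_{(x_0,P)}(\tau_1>n\mid X_n=x)$. The only cosmetic difference is that the paper first conditions on $\{\tau_1=t\}$ before collapsing to $\{\tau_1>n\}$, whereas you write the $1\{\tau_1>n\}$ representation from the outset.
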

The following Lemma, which is a variant of Lemma 2.1 in~\cite{Ananth-Varaiya-Walrand-II-87}, is the place where we use the IID block structure of the Markov chain.
\begin{lemma}\label{lem:renewal}
Define the mean return time of the Markov chain with initial distribution $q$ and irreducible transition matrix $P$ by
\[
R = \E_{(q, P)} [\inf\: \{n > 0 : X_n = X_0\}] < \infty.
\]
Let $\calF_n$ be the $\sigma$-field generated by $X_0, X_1, \ldots, X_n$.
Let $\tau$ be a stopping time with respect to $(\calF_n)_{n \in \Intp{}}$, with $\E_{(q, P)} \tau < \infty$. 
Then
\[
\E_{(q, P)} N (x, y, 0, \tau) \le \pi (x) P (x, y) (\E_{(q, P)} \tau + R - 1), ~ \text{for all} ~ x, y \in S.
\]
\begin{proof}
Using the $k$-th return times from~\autoref{fact:iiid-blocks}
we decompose $N (x, y, 0, \tau_k)$ in $k$ IID summands
\[
N (x, y, 0, \tau_k) =
\sum_{i=0}^{k-1} N (x, y, \tau_i, \tau_{i+1}).
\]
Now let $\kappa = \inf\: \{k > 0 : \tau_k \ge \tau\}$, so that $\tau_\kappa$ is the first return time to the initial state after or at time $\tau$.
By definition of $\tau_\kappa$ we have that
\[
\tau_\kappa - \tau \le \tau_\kappa - \tau_{\kappa-1} - 1.
\]
Taking expectations we obtain
\[
\E_{(q, P)} [\tau_\kappa - \tau] \le
\E_{(q, P)} [\tau_\kappa - \tau_{\kappa-1}] - 1= 
\E_{(q, P)} r_\kappa - 1 =
\E_{(q, P)} r_1 - 1 = R - 1,
\]
which also gives that
\[
\E_{(q, P)} [\tau_\kappa] \le \E_{(q, P)} [\tau] + R - 1 < \infty.
\]
This allows us to use Wald's identity, followed by~\autoref{lem:invariant},
followed by Wald's identity again, in order to get
\begin{align*}
\E_{(q, P)} N (x, y, 0, \tau_\kappa)
&= \E_{(q, P)} \sum_{i=0}^{\kappa-1} N (x, y, \tau_i, \tau_{i+1}) \\
&= \E_{(q, P)} [N (x, y, 0, \tau_1)] \E_q [\kappa] \\
&= p (x) P (x, y) \E_{(q, P)} [\tau_1] \E_{(q, P)} [\kappa] \\
&= p (x) P (x, y) \E_{(q, P)} [\tau_\kappa].
\end{align*}
Therefore,
\begin{align*}
\E_{(q, P)} N (x, y, 0, \tau) 
&\le \E_{(q, P)} N (x, y, 0, \tau_\kappa) \\
&= \pi (x) P (x, y) \E_{(q, P)} [\tau_\kappa] \\
&\le \pi (x) P (x, y) (\E_{(q, P)} [\tau] + R - 1).
\end{align*}
\end{proof}
\end{lemma}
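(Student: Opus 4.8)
The plan is to exploit the IID-block decomposition of \autoref{fact:iiid-blocks} together with Wald's identity, using \autoref{lem:invariant} to evaluate the per-block contribution. I would first introduce the return times $\{\tau_k\}$ and define the index of the first renewal at or after $\tau$,
\[
\kappa = \inf\{k > 0 : \tau_k \ge \tau\}.
\]
Because each residual satisfies $r_i \ge 1$ we have $\tau_k \ge k$, so $\kappa \le \tau$; combined with the fact that $\{\tau_k \ge \tau\}$ is determined by the first $k$ blocks, this shows that $\kappa$ is a stopping time for the block filtration $\calG_k = \sigma(v_1,\ldots,v_k)$ with $\E_{(q,P)}[\kappa] \le \E_{(q,P)}[\tau] < \infty$. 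Since $N(x,y,0,\cdot)$ is nondecreasing and $\tau \le \tau_\kappa$, it is enough to bound $\E_{(q,P)} N(x,y,0,\tau_\kappa)$.

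Next I would split the count over whole blocks, $N(x,y,0,\tau_\kappa) = \sum_{i=1}^{\kappa} N(x,y,\tau_{i-1},\tau_i)$, whose summands are IID integrable functions of the blocks $v_i$ (each is bounded by $r_i$, and $\E_{(q,P)} r_1 = R$). Two applications of Wald's identity, with \autoref{lem:invariant} identifying the single-block mean, then give
\[
\E_{(q,P)} N(x,y,0,\tau_\kappa) = \E_{(q,P)}[\kappa]\,\E_{(q,P)} N(x,y,0,\tau_1) = \pi(x)P(x,y)\,\E_{(q,P)}[\kappa]\,\E_{(q,P)}[\tau_1] = \pi(x)P(x,y)\,\E_{(q,P)}[\tau_\kappa].
\]
This collapses the whole claim to the scalar estimate $\E_{(q,P)}[\tau_\kappa] \le \E_{(q,P)}[\tau] + R - 1$, i.e. a bound of $R-1$ on the expected overshoot $\E_{(q,P)}[\tau_\kappa - \tau]$.

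The expected-overshoot bound is the one genuinely delicate point, and where I would concentrate the effort. The naive route writes $\tau_\kappa - \tau \le \tau_\kappa - \tau_{\kappa-1} - 1 = r_\kappa - 1$ (legitimate since $\tau > \tau_{\kappa-1}$) and then tries to replace $\E_{(q,P)} r_\kappa$ by $R = \E_{(q,P)} r_1$. I would be wary here: $r_\kappa$ is the length of the block straddling $\tau$, so it is size-biased rather than a fresh residual, and controlling it requires more than the mere stopping-time property of $\kappa$. A safer way to finish is to bypass $r_\kappa$ and handle the overshoot directly through the strong Markov property at $\tau$: given $\calF_\tau$ the overshoot is the residual hitting time of $X_0$ started from $X_\tau$, whose expectation can be controlled by an optional-stopping argument applied to the Dynkin martingale of the hitting-time function $g_x(z) = \E_z[\inf\{n \ge 0 : X_n = x\}]$. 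That martingale in fact yields the exact identity $\E_{(q,P)} N(x,y,0,\tau) = \pi(x)P(x,y)\bigl(\E_{(q,P)}\tau + \E_{(q,P)}[g_x(X_\tau) - g_x(X_0)]\bigr)$, reducing the lemma to showing that the correction $\E_{(q,P)}[g_x(X_\tau) - g_x(X_0)]$ does not exceed $R-1$; verifying that the renewal straddling $\tau$ cannot inflate this correction is, I expect, the crux of the whole argument, since the preceding decomposition and Wald steps are routine renewal bookkeeping.
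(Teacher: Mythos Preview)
Your block decomposition, the definition of $\kappa=\inf\{k>0:\tau_k\ge\tau\}$, the verification that $\kappa$ is a $\calG_k$-stopping time with finite mean, and the double application of Wald's identity together with \autoref{lem:invariant} to obtain $\E_{(q,P)} N(x,y,0,\tau_\kappa)=\pi(x)P(x,y)\,\E_{(q,P)}[\tau_\kappa]$ are exactly the paper's argument; up to that point you and the paper coincide line for line.

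Where you diverge is at the overshoot. The paper takes precisely the ``naive route'' you flag: from $\tau_\kappa-\tau\le r_\kappa-1$ it passes directly to
\[
\E_{(q,P)}[\tau_\kappa-\tau]\le \E_{(q,P)}[r_\kappa]-1=\E_{(q,P)}[r_1]-1=R-1,
\]
asserting $\E_{(q,P)}[r_\kappa]=\E_{(q,P)}[r_1]$ without further comment. Your inspection-paradox objection is on point: $\{\kappa=k\}$ lies in $\calG_k$, not $\calG_{k-1}$, so $r_\kappa$ is the length of the block straddling $\tau$ and need not have the law of a fresh residual; already for a constant $\tau$ one can make $\E[r_\kappa]$ strictly exceed $R$. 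The paper does not address this, so on this step you are being more careful than the proof you are being compared against.

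Your Dynkin-martingale detour is a genuinely different route that the paper does not take. The identity you write is correct: optional stopping applied to the martingale $N(x,0,n)-\pi(x)n-\pi(x)g_x(X_n)$, together with $\E_{(q,P)} N(x,y,0,\tau)=P(x,y)\,\E_{(q,P)} N(x,0,\tau)$, gives exactly
\[
\E_{(q,P)} N(x,y,0,\tau)=\pi(x)P(x,y)\bigl(\E_{(q,P)}\tau+\E_{(q,P)}[g_x(X_\tau)-g_x(X_0)]\bigr).
\]
Be aware, though, that the resulting correction term is $x$-dependent and is \emph{not} the same quantity as the renewal overshoot $\tau_\kappa-\tau$, so the residual claim $\E_{(q,P)}[g_x(X_\tau)-g_x(X_0)]\le R-1$ for every $x$ is a separate inequality that still has to be established and is not obviously easier than the step it replaces.
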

\begin{proof}[Proof of~\autoref{lem:KL-bound}]\hfill\break
Follows by taking $\Etheta$ of the log-likelihood ratio,
$\log \left(\frac{\Prtheta \mid_{\calF_\tau}}
{\Prlambda \mid_{\calF_\tau}}\right)$, given by~\eqref{eqn:log-ratio}, and applying~\autoref{lem:renewal} $K$ times for the stopping times $N_a (\tau) + 1,~ a = 1, \ldots, K$.
\end{proof}
The last part of~\autoref{app:lower-bound} involves the proof of~\autoref{thm:lower-bound}.
\begin{proof}[Proof of~\autoref{thm:lower-bound}]\hfill\break
Consider an alternative parametrization $\blambda \in \Atheta$.
The data processing inequality, see~\cite{CovThom06},
gives us as a way to lower bound the Kullback-Leibler divergence between the two probability measures $\Prtheta \mid_{\calF_{\tau_\delta}}$ and $\Prlambda \mid_{\calF_{\tau_\delta}}$.
In particular,
\[
\KLb{\Prtheta (\calE)}{\Prlambda (\calE)} \le 
\KL{\Prtheta \mid_{\calF_{\tau_\delta}}}{\Prlambda \mid_{\calF_{\tau_\delta}}},
~\text{for any}~ \calE \in \calF_{\tau_\delta},
\]
where for $p, q \in [0, 1]$, $\KLb{p}{q}$ denotes the binary Kullback-Leibler divergence,
\[
\KLb{p}{q} = p \log \frac{p}{q} + (1-p) \log \frac{1-p}{1-q}.
\]
We apply this inequality with the event $\calE = \{\hat{a}_{\tau_\delta} \neq a^* (\btheta)\} \in \calF_{\tau_\delta}$. The fact that the strategy $\calA_\delta$ is $\delta$-PC implies that
\[
\Pr_\btheta (\calE) \le \delta, \quad\text{and}\quad \Pr_\blambda (\calE) \ge 1 - \delta,
\]
hence
\[
\KLb{\delta}{1-\delta}
\le \KL{\Prtheta \mid_{\calF_{\tau_\delta}}}{\Prlambda \mid_{\calF_{\tau_\delta}}}.
\]
Combining this with~\autoref{lem:KL-bound} we get that
\[
\KLb{\delta}{1-\delta} \le
\sum_{a=1}^K \KL{q_{\theta_a}}{q_{\lambda_a}} +
\sum_{a=1}^K \left(\Etheta [N_a (\tau_\delta)] + R_a\right) \KL{\theta_a}{\lambda_a}.
\]
The fact that $\sum_{a=1}^K N_a (\tau_\delta) \le \tau_\delta$ gives,
\begin{align*}
& \KLb{\delta}{1-\delta} - 
\sum_{a=1}^K \KL{q_{\theta_a}}{q_{\lambda_a}} \\
&\qquad\le \left(\Etheta [\tau_\delta] + \sum_{a=1}^K R_a\right)
\sum_{a=1}^K
\frac{\Etheta [N_a (\tau_\delta)] + R_a}
{\sum_{b=1}^K \left(\Etheta [N_b (\tau_\delta)] + R_b\right)} \KL{\theta_a}{\lambda_a},
\end{align*}
and now we follow the technique of~\cite{GK16} which combines multiple alternative models $\blambda$,
\begin{align*}
& \KLb{\delta}{1-\delta} - 
\sum_{a=1}^K \KL{q_{\theta_a}}{q_{\lambda_a}} \\
&\qquad\le 
\left(\Etheta [\tau_\delta] + \sum_{a=1}^K R_a\right)
\inf_{\blambda \in \Atheta}\sum_{a=1}^K
\frac{\Etheta [N_a (\tau_\delta)] + R_a}
{\sum_{b=1}^K \left(\Etheta [N_b (\tau_\delta)] + R_b\right)} \KL{\theta_a}{\lambda_a}
\\
&\qquad\le \left(\Etheta [\tau_\delta] + \sum_{a=1}^K R_a\right)
\sup_{\bw \in \Simplex{[K]}} \inf_{\blambda \in \Atheta}\: \sum_{a=1}^K
w_a \KL{\theta_a}{\lambda_a}.
\end{align*}
The conclusion follows by letting $\delta$ go to $0$, and using the fact that
\[
\lim_{\delta \to 0} \frac{\KLb{\delta}{1-\delta}}{\log \frac{1}{\delta}} = 1.
\]
\end{proof}
\section{Exponential Family of Stochastic Matrices}\label{app:exp-fam}

For a stochastic matrix $P$ on $S$, and a probability distribution $p \in \Simplex{S}$, we use the notation $p \odot P \in \Simplex{S \times S}$ to denote the bivariate distribution on $S \times S$ given by
\[
(p \odot P) (x, y) = p (x) P (x, y).
\]
We start by establishing parts (a), (b) and (c) of~\autoref{lem:dual-map}.
\begin{proof}[Proof of~\autoref{lem:dual-map}]\hfill
\begin{enumerate}[label=(\alph*)]
\item Each entry of $\tilde{P}_\theta$ is a real analytic function
    of $\theta$, and for each $\theta_0$ the Perron-Frobenius eigenvalue
    $\rho (\theta_0)$ is simple with a unique corresponding 
    left and right eigenvectors $u_{\theta_0}, ~v_{\theta_0}$ and such that they are both positive,
    $\sum_x u_{\theta_0} (x) = 1$ and $\sum_x u_{\theta_0} (x) v_{\theta_0} (x) = 1$.
    The conclusion follows by standard implicit function theorem type of arguments. See for example Theorem 7 and Theorem 8 in Chapter 9 from the book of~\cite{Lax-07}.
\item For any $x, y \in S$ such that $P (x, y) > 0$ we have that
    \[
    \log P_\theta (x, y) = \theta f (y) - A (\theta) + \log v_\theta (y) - \log v_\theta (x) + \log P (x, y).
    \]
    Differentiating with respect to $\theta$, and taking expectation with respect to $\pi_\theta \odot P_\theta$ we obtain
    \[
    \E_{(X, Y) \sim \pi_\theta \odot P_\theta} \frac{d}{d \theta} \log P_\theta (X, Y)
    = \pi_\theta (f) - \dot{A} (\theta),
    \]
    where the logarithms cancel out since $\pi_\theta \odot P_\theta$ has identical marginals. The conclusion follows because
    \[
    \E_{(X, Y) \sim \pi_\theta \odot P_\theta} \frac{d}{d \theta} \log P_\theta (X, Y)
    = \sum_x \pi_\theta (x) \frac{d}{d \theta} \left(\sum_y P_\theta (x, y)\right)
    = 0.
    \]
\item For any $x, y \in S$ such that $P (x, y) > 0$ we have that
    \[
    \frac{d^2}{d \theta^2} \log P_\theta (x, y) =
    - \ddot{A} (\theta) + \frac{d^2}{d \theta^2} \log v_\theta (y) -
    \frac{d^2}{d \theta^2} \log v_\theta (x).
    \]
    Taking expectation with respect to $\pi_\theta \odot P_\theta$ we obtain
    \begin{align*}
    \ddot{A} (\theta)
    &= - \E_{(X, Y) \sim \pi_\theta \odot P_\theta}\frac{d^2}{d \theta} \log P_\theta (X, Y) \\
    &= \E_{(X, Y) \sim \pi_\theta \odot P_\theta}\left(\frac{d}{d \theta} \log P_\theta (X, Y)\right)^2 \ge 0.
    \end{align*}
    This ensures that $\dot{A} (\theta)$ is increasing.
    
    Assume, towards contradiction, that $\ddot{A} (\theta) = 0$ in a neighborhood of  $\theta_0$.
    Then $P_\theta$ does not depend on $\theta$ in a neighborhood of  $\theta_0$.
    The $S_M$ component is irreducible so we can find 
    $x_1, \ldots, x_{l+1} \in S_M$ such that $P (x_i, x_{i+1}) > 0$  for $i=1, \ldots, l$ and $x_1 = x_{l+1}$, and so
    \[
    P_\theta (x_1, x_2) \ldots P_\theta (x_l, x_{l+1})
    = \frac{P (x_1, x_2) \ldots P (x_l, x_{l+1}) e^{\theta l M}}{\rho (\theta)^l},
    \]
    and the $S_m$ component is irreducible as well so we can find 
    $y_1, \ldots, y_{k+1} \in S_m$ such that $P (y_i, y_{i+1}) > 0$  for $i=1, \ldots, k$ and $y_1 = y_{k+1}$, and so
    \[
    P_\theta (y_1, y_2) \ldots P_\theta (y_l, y_{k+1})
    = \frac{P (y_1, y_2) \ldots P (y_k, y_{k+1}) e^{\theta k m}}{\rho (\theta)^k}.
    \]
    This means that the ratio
    \[
    \frac
    {(P_\theta (x_1, x_2) \cdots P_\theta (x_l, x_{l+1}))^{1/l}}
    {(P_\theta (y_1, y_2) \cdots P_\theta (y_k, y_{k+1}))^{1/k}} = \frac{P (x_1, x_2) \cdots P (x_l, x_{l+1})}{P (y_1, y_2) \cdots P (y_k, y_{k+1})} e^{\theta (M-m)},
    \]
    depends on $\theta$.
    This contradicts the assumption that $P_\theta$ does not depend on $\theta$ on a neighborhood of $\theta_0$.
    
    Therefore, $\ddot{A} (\theta)$ does not vanish on any nonempty open interval
    of $\Real{}$, and so we conclude that $\dot{A} (\theta)$ is strictly increasing.
\end{enumerate}
\end{proof}
Showing part (d) of~\autoref{lem:dual-map} requires the study of the limiting behavior of the family which we do in the following two Lemmata. The first is a simple extension of the Perron-Frobenius theory.
\begin{lemma}\label{lem:PF}
Let $W \in \Realp{n \times n}$ be a non-negative matrix
consisting of:
a non-negative irreducible square block $A \in \Realp{k \times k}$,
and a non-negative rectangular block $B \in \Realp{(n-k) \times k}$
such that none of the rows of $B$ is zero,
for some $k \in \{1, \ldots, n\}$,
assembled together in the following way:
\[
W =
\begin{bmatrix}
A & 0 \\
B & 0
\end{bmatrix},
\]
Then, $\rho (W) = \rho (A)$ is a simple eigenvalue of $W$,
which we call the Perron-Frobenius eigenvalue, and is associated with unique left and right eigenvectors $u_W, v_W$ such that $u_W$ has its first $k$ coordinates positive
and its last $n-k$ coordinates equal to zero,
$v_W$ is positive, $\sum_{x=1}^n u_W (x) = 1$, and
$\sum_{x=1}^n u_W (x) v_W (x) = 1$.
\begin{proof}
Let $u_A, v_A$ be the unique left and right eigenvectors of $A$ corresponding to the Perron-Frobenius eigenvalue $\rho (A)$,
such that both of them are positive, $\sum_{x=1}^k u_A (x) = 1$ and $\sum_{x=1}^k u_A (x) v_A (x) = 1$. Observe that the vectors
\[
u_W =
\begin{bmatrix}
u_A \\
0
\end{bmatrix}
,~\text{and}~
v_W =
\begin{bmatrix}
v_A \\
B v_A / \rho (A)
\end{bmatrix},
\]
are left and right eigenvectors of $W$ with associated eigenvalue $\rho (A)$, and satisfy all the conditions.
In addition, $\rho (W)$ being greater than $\rho (A)$, or $\rho (W)$ not being a simple eigenvalue, or $u_W, v_W$ not being unique would contradict the Perron-Frobenius Theorem for the nonnegative irreducible matrix $A$.
\end{proof}
\end{lemma}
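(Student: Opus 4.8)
The plan is to read off the Perron-Frobenius data of $W$ directly from that of its irreducible block $A$. I would first invoke the Perron-Frobenius theorem for the non-negative irreducible matrix $A$ (as in Theorem 8.4.4 of~\cite{HJ13}, already cited above): $\rho(A)>0$ is a simple eigenvalue of $A$, with essentially unique positive left and right eigenvectors $u_A, v_A$, which I normalize so that $\sum_{x=1}^k u_A(x)=1$ and $\sum_{x=1}^k u_A(x)v_A(x)=1$. I would then exhibit as candidates the vector $u_W$ obtained by extending $u_A$ with $n-k$ zeros, and the vector $v_W$ obtained by appending $Bv_A/\rho(A)$ to $v_A$. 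A one-line block computation, using that the last $n-k$ columns of $W$ vanish, gives $u_W^{\top}W=(u_A^{\top}A,\,0)=\rho(A)u_W^{\top}$ and $Wv_W=(Av_A,\,Bv_A)^{\top}=\rho(A)v_W$, so both are eigenvectors of $W$ for the eigenvalue $\rho(A)$.

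The sign and normalization claims are then immediate. The first $k$ coordinates of $u_W$ are those of $u_A$, hence positive, and the last $n-k$ are zero. The vector $v_A$ is positive, and since $B\ge 0$ has no zero row, every coordinate of $Bv_A$ is a positive combination of the positive entries of $v_A$, so $v_W>0$. Finally $\sum_x u_W(x)=\sum_{x=1}^k u_A(x)=1$ and $\sum_x u_W(x)v_W(x)=\sum_{x=1}^k u_A(x)v_A(x)=1$, because $u_W$ vanishes on the last $n-k$ coordinates.

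It remains to show that $\rho(A)$ is in fact the spectral radius of $W$ and that it is simple. I would argue from the block-triangular form $\det(\lambda I_n-W)=\det(\lambda I_k-A)\cdot\lambda^{\,n-k}$, so the eigenvalues of $W$ are exactly those of $A$ together with $0$. Hence $\rho(W)=\max\{\rho(A),0\}=\rho(A)$, and since $\rho(A)\neq 0$ its algebraic multiplicity as a root of $\det(\lambda I_n-W)$ equals its multiplicity as a root of $\det(\lambda I_k-A)$, which is $1$ by Perron-Frobenius; a simple eigenvalue has a one-dimensional eigenspace, so $u_W$ and $v_W$ are, up to the chosen normalization, the unique left and right eigenvectors. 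Alternatively, avoiding the determinant: every eigenvalue of $W$ is an eigenvalue of $A$ or is $0$, hence of modulus at most $\rho(A)$, so $\rho(W)\le\rho(A)$; the eigenvector $v_W$ forces $\rho(W)\ge\rho(A)$; and a failure of simplicity or of uniqueness for $W$ would transfer back to $A$, contradicting Perron-Frobenius.

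This lemma is little more than bookkeeping on top of the classical Perron-Frobenius theorem, so I do not anticipate a real obstacle; the only point needing a moment's care is that $\rho(A)>0$, which is what makes $v_W$ well defined and keeps the $\lambda^{\,n-k}$ factor disjoint from the Perron eigenvalue in the multiplicity count. This positivity is guaranteed by irreducibility of the non-negative block $A$.
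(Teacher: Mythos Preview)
Your proposal is correct and follows essentially the same approach as the paper: you build $u_W$ and $v_W$ from $u_A$, $v_A$, and $Bv_A/\rho(A)$ exactly as the paper does, and your block-triangular determinant factorization $\det(\lambda I_n-W)=\det(\lambda I_k-A)\,\lambda^{n-k}$ simply makes explicit what the paper compresses into the one-line remark that any failure of $\rho(W)=\rho(A)$, simplicity, or uniqueness would contradict Perron--Frobenius for $A$.
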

Now we define the matrix $\overline{P}_\infty = \lim_{\theta \to \infty} e^{-\theta M} \tilde{P}_\theta$, i.e. the matrix $P$ where we keep the columns $y \in S_M$ intact, and we zero out all the other columns.
After suitable permutation of the states~\autoref{lem:PF} applies for $\overline{P}_\infty$, and so
$\rho (\overline{P}_\infty)$ is a simple eigenvalue of 
$\overline{P}_\infty$, which is associated with unique left and right eigenvectors $u_\infty, v_\infty$ such that $u_\infty (x) > 0$ for $x \in S_M$ and $u_\infty (x) = 0$ for $x \not\in S_M$, $v_\infty$ is positive, $\sum_x u_\infty (x) = 1$ and $\sum_x u_\infty (x) v_\infty (x) = 1$.
Similarly, we define $\overline{P}_{-\infty} := \lim_{\theta \to -\infty} e^{-\theta m} \tilde{P}_\theta$,
with Perron-Frobenius eigenvalue $\rho (\overline{P}_{-\infty})$,
which is associated with unique left and right eigenvectors $u_{-\infty}, v_{-\infty}$ such that
$u_{-\infty} (x) > 0$ for $x \in S_m$
and $u_{-\infty} (x) = 0$ for $x \not\in S_m$, $v_{-\infty}$ is positive,
$\sum_x u_{-\infty} (x) = 1$ and $\sum_x u_{-\infty} (x) v_{-\infty} (x) = 1$.

The following Lemma characterizes the limiting stochastic matrices $P_\infty, ~P_{-\infty}$ of the exponential family,
and proves part (d) of~\autoref{lem:dual-map}.
\begin{lemma}\label{lem:limit}\hfill
\begin{enumerate}[label=(\alph*)]
\item $\theta M - A (\theta) \to
- \log \rho (\overline{P}_\infty), ~ u_\theta \to u_\infty, ~ v_\theta \to v_\infty$, as $\theta \to \infty$, and so
\[
\lim_{\theta \to \infty} P_\theta (x, y) =
\frac{\overline{P}_\infty (x, y) v_\infty (y)}{\rho (\overline{P}_\infty) v_\infty (x)} =: P_\infty (x, y),
\]
and $\pi_\theta (f) \to M$ as $\theta \to \infty$.

\item
$\theta m - A (\theta) \to
- \log \rho (\overline{P}_{-\infty}), ~ u_\theta \to u_{-\infty}, ~ v_\theta \to v_{-\infty}$, as $\theta \to -\infty$, and so
\[
\lim_{\theta \to - \infty} P_\theta (x, y) = \frac{\overline{P}_{-\infty} (x, y) v_{-\infty} (y)}{\rho (\overline{P}_{-\infty})v_{-\infty} (x)}
=: P_{-\infty} (x, y),
\]
and $\pi_\theta (f) \to m$ as $\theta \to - \infty$.
\end{enumerate}
\begin{proof} Both parts are a straightforward application 
of the continuity of the function
$P \mapsto (\rho (P), ~u_P, ~v_P)$,
at $\overline{P}_\infty$ and $\overline{P}_{-\infty}$.
The continuity of eigenvalues and eigenvectors is due to the fact that the Perron-Frobenius eigenvalue $\rho (P)$ is a simple eigenvalue and more details can be found in Chapter 3 of the book
~\cite{Ortega90}.
\end{proof}
\end{lemma}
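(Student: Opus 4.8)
The plan is to realize $P_\theta$ as a normalization of a matrix that converges, as $\theta\to\infty$, to $\overline{P}_\infty$, and then to push everything through the continuity of the Perron--Frobenius data. First I would observe that $e^{-\theta M}\tilde{P}_\theta(x,y) = P(x,y)\,e^{\theta(f(y)-M)}$, so since $f(y)=M$ exactly for $y\in S_M$ and $f(y)<M$ otherwise, $e^{-\theta M}\tilde{P}_\theta\to\overline{P}_\infty$ entrywise. Scaling a matrix by $e^{-\theta M}$ scales its spectral radius by $e^{-\theta M}$ and leaves its eigenvectors unchanged; moreover $e^{-\theta M}\tilde{P}_\theta$ has the same sparsity pattern as $P$, hence is irreducible, so its Perron--Frobenius eigenvalue equals its spectral radius $e^{A(\theta)-\theta M}$ and its normalized left/right Perron--Frobenius eigenvectors (under $\sum_x u_\theta(x)=1$, $\sum_x u_\theta(x)v_\theta(x)=1$) are precisely $u_\theta,v_\theta$.

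Next I would apply \autoref{lem:PF} to $\overline{P}_\infty$: after permuting states so that $S_M$ comes first, $\overline{P}_\infty$ has exactly the block form of that lemma, with the $S_M\times S_M$ block irreducible by \eqref{eqn:as-up-1} and every remaining row nonzero by \eqref{eqn:as-up-2}; hence $\rho(\overline{P}_\infty)$ is a simple eigenvalue of $\overline{P}_\infty$, and since the spectrum of $\overline{P}_\infty$ is that of its $S_M$-block together with $0$, it is also the eigenvalue of largest modulus, with eigenvectors $u_\infty,v_\infty$ as stated. Because the spectral radius and (the top eigenvalue being simple and isolated) the associated normalized eigenvectors depend continuously on the matrix, applying this along $e^{-\theta M}\tilde{P}_\theta\to\overline{P}_\infty$ gives $e^{A(\theta)-\theta M}\to\rho(\overline{P}_\infty)$, i.e. $\theta M - A(\theta)\to-\log\rho(\overline{P}_\infty)$, together with $u_\theta\to u_\infty$, $v_\theta\to v_\infty$ (the normalizations pass to the limit and coincide with those supplied by \autoref{lem:PF}). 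Substituting into $P_\theta(x,y)=\dfrac{e^{-\theta M}\tilde{P}_\theta(x,y)\,v_\theta(y)}{e^{A(\theta)-\theta M}\,v_\theta(x)}$ yields the claimed limit $P_\infty$, while $\pi_\theta(x)=u_\theta(x)v_\theta(x)\to u_\infty(x)v_\infty(x)$, a probability vector supported on $S_M$ (as $u_\infty$ vanishes off $S_M$), so $\pi_\theta(f)\to\sum_{x\in S_M}M\,u_\infty(x)v_\infty(x)=M$. Part (b) is verbatim with $M,S_M,\overline{P}_\infty$ replaced by $m,S_m,\overline{P}_{-\infty}$ and $\theta\to\infty$ by $\theta\to-\infty$, invoking \eqref{eqn:as-low-1} and \eqref{eqn:as-low-2}. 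Finally, part (d) of \autoref{lem:dual-map} drops out: by parts (a)--(c) there, $\dot{A}=\mu(\cdot)$ is continuous and strictly increasing with $\mu(0)\in(m,M)$, and combined with $\mu(\theta)\to M$, $\mu(\theta)\to m$ just proven, the intermediate value theorem makes $\dot{A}$ a strictly increasing bijection of $\Real{}$ onto $(m,M)$, so $\calM=(m,M)$.

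The hard part will be the continuity statement, which is a little delicate because the limit matrix $\overline{P}_\infty$ is reducible, so one cannot quote the Perron--Frobenius theorem for irreducible matrices at the limit point. The resolution is that \autoref{lem:PF} already guarantees the relevant eigenvalue is simple, isolated, and dominant, so the standard perturbation theory for a simple isolated eigenvalue --- continuity of the eigenvalue and of the spectral projection, hence of the suitably normalized eigenvectors, as in \cite{Ortega90} --- does apply. I would also spell out the minor book-keeping that it is the \emph{spectral radius} of $e^{-\theta M}\tilde{P}_\theta$, and not some other eigenvalue, that converges to $\rho(\overline{P}_\infty)$: all eigenvalues converge (with multiplicity) to eigenvalues of $\overline{P}_\infty$, and $\rho(\overline{P}_\infty)$ is the one of largest modulus there.
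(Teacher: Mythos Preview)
Your proposal is correct and follows essentially the same approach as the paper: apply continuity of the Perron--Frobenius data $P\mapsto(\rho(P),u_P,v_P)$ along $e^{-\theta M}\tilde{P}_\theta\to\overline{P}_\infty$, invoking \autoref{lem:PF} to guarantee simplicity of the dominant eigenvalue at the (reducible) limit. You flesh out considerably more detail than the paper's one-sentence proof --- in particular the observation that the limit matrix is reducible so that \autoref{lem:PF} rather than ordinary Perron--Frobenius is needed, and the derivation of $\pi_\theta(f)\to M$ and of part (d) of \autoref{lem:dual-map} --- but the underlying argument is the same.
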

This lemma suggests that we can extend the domain of $\dot{A} (\theta)$ by continuity over the set of extended real numbers $\overline{\Real{}} = \Real{} \cup \{\pm \infty\}$, by defining $\dot{A} (\infty) = M$ and
$\dot{A} (-\infty) = m$. This way we have a one-to-one and onto correspondence of  $\overline{\Real{}}$ with the closed interval $[m, M]$, with the limit stochastic matrices being $P_\infty$
and $P_{-\infty}$, which represent degenerate Markov chains where all the transitions lead into states $y \in S_M$ when $\theta = \infty$,
and into states $y \in S_m$ when $\theta = -\infty$.

We proceed by deriving some alternative representations
for the Kullback-Leibler divergence rate between elements of the exponential family. The following lemma is needed in order to derive the asymptotic Kullback-Leibler divergence rate.
\begin{lemma}\label{lem:limit-KL}\hfill
\begin{enumerate}[label=(\alph*)]
\item $\theta \dot{A} (\theta) - A (\theta) \to
- \log \rho (\overline{P}_\infty)$, as $\theta \to \infty$.
    
\item $\theta \dot{A} (\theta) - A (\theta) \to
- \log \rho (\overline{P}_{-\infty})$, as $\theta \to -\infty$.
    
\end{enumerate}
\begin{proof}
Let $M_2 = \max_{x \not \in S_M} f (x)$. 
Fix $x \in S$ and $y \not\in S_M$.
Pick $y_M \in S_M$ such that $P (x, y_M) > 0$.

Using~\autoref{lem:evec-ratio} we see that there is a constant
$C = C (P, f)$ such that
\[
P_\theta (x, y)
\le C e^{- \theta (M-f (y))} P_\theta (x, y_M)
\le C e^{- \theta (M-M_2)}.
\]
Therefore the stationary probability of any such $y$ is at most
$\pi_\theta (y) \le C e^{- \theta (M-M_2)}$, and so
\[
\pi_\theta (f) \ge (1- C |S| e^{- \theta (M-M_2)}) M + C |S| e^{- \theta (M-M_2)} m.
\]
From this we obtain that
\[
0 \le \theta (M - \pi_\theta (f)) \le C |S| \theta e^{- \theta (M-M_2)} (M - m), ~ \text{for any} ~ \theta \ge 0,
\]
which yields that $\theta (\dot{A} (\theta) - M) \to 0$, as $\theta \to \infty$. Part (a) now follows, since~\autoref{lem:limit} suggests that
$\theta M - A (\theta) \to - \log \rho (\overline{P}_\infty)$, as $\theta \to \infty$.
The second limit follows by the same argument.
\end{proof}
\end{lemma}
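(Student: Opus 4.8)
The plan is to reduce both limits to the convergence statements in \autoref{lem:limit}, which already records that $\theta M - A(\theta) \to -\log\rho(\overline P_\infty)$ as $\theta \to \infty$ and $\theta m - A(\theta) \to -\log\rho(\overline P_{-\infty})$ as $\theta \to -\infty$. Since $\dot{A}(\theta) = \mu(\theta) = \pi_\theta(f)$ by \autoref{lem:dual-map}(b), for part (a) I would use the decomposition
\[
\theta \dot{A}(\theta) - A(\theta) = \bigl(\theta M - A(\theta)\bigr) - \theta\bigl(M - \pi_\theta(f)\bigr),
\]
and the analogous one with $m$ in place of $M$ for part (b). By \autoref{lem:limit} the first bracket converges to the claimed limit, so the whole argument comes down to showing that the correction term satisfies $\theta\bigl(M - \pi_\theta(f)\bigr) \to 0$ as $\theta \to \infty$ (resp.\ $\theta\bigl(\pi_\theta(f) - m\bigr) \to 0$ as $\theta \to -\infty$).

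To control the correction term I would show that the stationary distribution $\pi_\theta$ escapes $S \setminus S_M$ at an exponential rate in $\theta$. Fix $y \notin S_M$ and $x \in S$, and pick $y_M \in S_M$ with $P(x, y_M) > 0$, which the structural assumptions \eqref{eqn:as-up-1} and \eqref{eqn:as-up-2} guarantee. From $P_\theta(x, y) = \tfrac{v_\theta(y)}{v_\theta(x)}\, e^{\theta f(y) - A(\theta)}\, P(x, y)$ one gets
\[
\frac{P_\theta(x, y)}{P_\theta(x, y_M)} = \frac{v_\theta(y)}{v_\theta(y_M)}\, e^{-\theta(M - f(y))}\, \frac{P(x, y)}{P(x, y_M)}.
\]
Combining $P_\theta(x, y_M) \le 1$, the gap $f(y) \le M_2 := \max_{x \notin S_M} f(x) < M$, and a bound on the eigenvector ratio $v_\theta(y)/v_\theta(y_M)$ that is uniform over $\theta \ge 0$ --- the same right-Perron-Frobenius-eigenvector estimate that underlies the proof of \autoref{thm:concentration-bound} --- yields a constant $C = C(P, f)$ with $P_\theta(x, y) \le C\, e^{-\theta(M - M_2)}$ for all $\theta \ge 0$. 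Then the stationarity identity $\pi_\theta(y) = \sum_x \pi_\theta(x) P_\theta(x, y)$ gives $\pi_\theta(y) \le C\, e^{-\theta(M - M_2)}$ for every $y \notin S_M$.

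Finally, using $m \le f \le M$,
\[
0 \le M - \pi_\theta(f) = \sum_{y \notin S_M} \pi_\theta(y)\bigl(M - f(y)\bigr) \le C\, |S|\, (M - m)\, e^{-\theta(M - M_2)},
\]
so $0 \le \theta\bigl(M - \pi_\theta(f)\bigr) \le C\, |S|\, (M - m)\, \theta\, e^{-\theta(M - M_2)} \to 0$ as $\theta \to \infty$ because $M - M_2 > 0$; substituting into the decomposition proves part (a). Part (b) follows by the symmetric argument, replacing $S_M, M, M_2$ by $S_m, m$, and $M_2' := \min_{x \notin S_m} f(x)$, and invoking \eqref{eqn:as-low-1}, \eqref{eqn:as-low-2} and \autoref{lem:limit}(b). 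The only step that is not routine bookkeeping is securing the uniform-in-$\theta$ control of the eigenvector ratio $v_\theta(y)/v_\theta(y_M)$; granted that, everything else rests on the convergence already established in \autoref{lem:limit}.
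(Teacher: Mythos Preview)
Your proposal is correct and follows essentially the same route as the paper: decompose $\theta\dot A(\theta)-A(\theta)=(\theta M-A(\theta))-\theta(M-\pi_\theta(f))$, invoke \autoref{lem:limit} for the first term, and kill the correction term by bounding $P_\theta(x,y)\le C e^{-\theta(M-M_2)}$ via the uniform eigenvector-ratio estimate (this is exactly \autoref{lem:evec-ratio}, which the paper also forward-references here), then pass to $\pi_\theta(y)$ by stationarity. The only cosmetic difference is that you make the role of $P(x,y)/P(x,y_M)$ explicit before absorbing it into $C$, whereas the paper leaves that implicit.
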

Having this in our possession we state and prove alternative representations for the Kullback-Leibler divergence rate.
\begin{lemma}\label{lem:KL}\hfill
\begin{enumerate}[label=(\alph*)]
    \item For all $\theta_1, \theta_2 \in \Real{}$,
    \begin{align*}
    \KL{\theta_1}{\theta_2} &= \theta_1 \dot{A} (\theta_1) - A (\theta_1) -
    (\theta_2 \dot{A} (\theta_1) - A (\theta_2)); \\
    \KL{\infty}{\theta_2}&= - \log \rho (\overline{P}_\infty) - (\theta_2 M - A (\theta_2)); \\
    \KL{-\infty}{\theta_2}&= - \log \rho (\overline{P}_{-\infty}) - (\theta_2 m - A (\theta_2)).
    \end{align*}
    
    \item For all $\mu_1, \mu_2 \in (m, M)$,
    \begin{align*}
    \KL{\mu_1}{\mu_2}
    &= \dot{A}\inv (\mu_1) \mu_1 - A (\dot{A}\inv (\mu_1)) - (\dot{A}\inv (\mu_2) \mu_1 - A (\dot{A}\inv (\mu_2))); \\
    \KL{M}{\mu_2}
    &= -\log \rho (\overline{P}_\infty) - (\dot{A}\inv (\mu_2)M-A (\dot{A}\inv (\mu_2))); \\
    \KL{m}{\mu_2} &= 
    -\log \rho (\overline{P}_{-\infty}) - (\dot{A}\inv (\mu_2)m-A (\dot{A}\inv (\mu_2))).
    \end{align*}
\end{enumerate}
\begin{proof}
For $\theta_1, \theta_2 \in \Real{}$ we have that
\begin{align*}
\KL{\theta_1}{\theta_2}
&= \E_{(X,Y) \sim \pi_{\theta_1} \odot P_{\theta_1}}
\log \frac{P_{\theta_1} (X, Y)}{P_{\theta_2} (X, Y)} \\
&= A (\theta_2) - A (\theta_1) - (\theta_2 - \theta_1) \dot{A} (\theta_1)
+ \E_{(X,Y) \sim \pi_{\theta_1} \odot P_{\theta_1}} 
\left[\log \frac{v_{\theta_1} (Y)}{v_{\theta_1} (X)} -
\log \frac{v_{\theta_2} (Y)}{v_{\theta_2} (X)}\right] \\
&= \theta_1 \dot{A} (\theta_1) - A (\theta_1) -
    (\theta_2 \dot{A} (\theta_1) - A (\theta_2)),
\end{align*}
and the third equality follows due to the fact that $\pi_{\theta_1} \odot P_{\theta_1}$ has identical marginals and so the expectation vanishes.

Now let $\theta_2 \in \Real{}$.
Using the continuity of the Kullback-Leibler divergence rate, 
the formula that we just established,
and~\autoref{lem:limit-KL} we obtain
\begin{align*}
\KL{\infty}{\theta}
&= \lim_{\theta_1 \to \infty} \KL{\theta_1}{\theta_2} \\
&= \lim_{\theta_1 \to \infty}
\left(\theta_1 \dot{A} (\theta_1) - A (\theta_1)\right)
- \lim_{\theta_1 \to \infty} 
\left(\theta_2 \dot{A} (\theta_1) - A (\theta_2)\right) \\
&= - \log \rho (\overline{P}_\infty) - (\theta_2 M - A (\theta_2)).
\end{align*}
We argue in the same way for $\KL{-\infty}{\theta}$, and
part (b) directly follows from part (a).
\end{proof}
\end{lemma}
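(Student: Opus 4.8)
The plan is to read off the two identities in part (a) from a direct computation, and then obtain part (b) by a change of variables. Recall that $\KL{\theta_1}{\theta_2}$ is by definition the expectation of $\log \frac{P_{\theta_1}(X,Y)}{P_{\theta_2}(X,Y)}$ under $(X,Y) \sim \pi_{\theta_1} \odot P_{\theta_1}$, and that every member of the family factors as $P_\theta(x,y) = \frac{v_\theta(y)}{v_\theta(x)} e^{\theta f(y) - A(\theta)} P(x,y)$. First I would form the log-ratio: the common factor $P(x,y)$ cancels, leaving $(\theta_1-\theta_2) f(y) - (A(\theta_1)-A(\theta_2)) + \big(\log v_{\theta_1}(y) - \log v_{\theta_1}(x)\big) - \big(\log v_{\theta_2}(y) - \log v_{\theta_2}(x)\big)$. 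Then I would take expectations term by term: each telescoping block $\E[\log v_\theta(Y) - \log v_\theta(X)]$ vanishes because $\pi_{\theta_1} \odot P_{\theta_1}$ has both marginals equal to $\pi_{\theta_1}$, and $\E[f(Y)] = \pi_{\theta_1}(f) = \dot{A}(\theta_1)$ by \autoref{lem:dual-map}(b). Collecting and rearranging yields $\KL{\theta_1}{\theta_2} = \theta_1\dot{A}(\theta_1) - A(\theta_1) - (\theta_2\dot{A}(\theta_1) - A(\theta_2))$, which is the first identity of part (a).

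For the two boundary identities in part (a) I would argue by continuity. Viewing the Kullback--Leibler divergence rate as a function of its first argument and extending it to $\theta_1 = \pm\infty$ through the limiting stochastic matrices $P_\infty, P_{-\infty}$ of \autoref{lem:limit}, this function is continuous: the entries of $P_\infty, P_{-\infty}$ that vanish sit inside the support of $P$, hence of $P_{\theta_2}$, so no infinite term arises, and with the convention $0\log 0 = 0$ every summand converges by the continuity of $P \mapsto (\rho(P), u_P, v_P)$. Therefore $\KL{\infty}{\theta_2} = \lim_{\theta_1\to\infty}\KL{\theta_1}{\theta_2}$, which I would evaluate on the formula just obtained by splitting it as $\lim_{\theta_1\to\infty}\big(\theta_1\dot{A}(\theta_1) - A(\theta_1)\big) - \lim_{\theta_1\to\infty}\big(\theta_2\dot{A}(\theta_1) - A(\theta_2)\big)$. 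The first limit equals $-\log\rho(\overline{P}_\infty)$ by \autoref{lem:limit-KL}(a), and the second equals $\theta_2 M - A(\theta_2)$ because $\dot{A}(\theta_1) \to M$ by \autoref{lem:limit}(a); this gives $\KL{\infty}{\theta_2} = -\log\rho(\overline{P}_\infty) - (\theta_2 M - A(\theta_2))$. The $\theta_1 \to -\infty$ case is verbatim, using \autoref{lem:limit-KL}(b) and \autoref{lem:limit}(b).

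Part (b) is then part (a) rewritten under the substitution $\theta_i = \dot{A}\inv(\mu_i)$, which is legitimate because $\dot{A}$ is a strictly increasing bijection of $\overline{\Real{}}$ onto $[m,M]$ by \autoref{lem:dual-map}(c),(d) together with \autoref{lem:limit}; in particular $\dot{A}(\dot{A}\inv(\mu_1)) = \mu_1$ turns $\theta_1\dot{A}(\theta_1)$ into $\dot{A}\inv(\mu_1)\mu_1$ and $\theta_2\dot{A}(\theta_1)$ into $\dot{A}\inv(\mu_2)\mu_1$, while the rows for $\mu_1 = M$ and $\mu_1 = m$ correspond to $\theta_1 = +\infty$ and $\theta_1 = -\infty$ and hence come from the boundary identities. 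I do not expect any deep obstacle: the two places that need care are the vanishing of the telescoping $\log v_\theta$ expectations (a one-line identical-marginals observation) and, more substantively, the continuity of the divergence rate up to the degenerate boundary matrices $P_\infty, P_{-\infty}$, which is the step that genuinely rests on the Perron--Frobenius continuity supplied by \autoref{lem:limit} rather than on bare algebra.
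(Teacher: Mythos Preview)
Your proposal is correct and follows essentially the same route as the paper: a direct computation of $\KL{\theta_1}{\theta_2}$ via the factorization of $P_\theta$ and the identical-marginals cancellation, then passage to $\theta_1 \to \pm\infty$ using \autoref{lem:limit-KL} and \autoref{lem:limit}, and finally the reparametrization $\theta_i = \dot{A}\inv(\mu_i)$ for part (b). The only difference is that you spell out why the divergence rate is continuous at the degenerate endpoints, whereas the paper simply asserts this continuity; this is a welcome clarification rather than a different argument.
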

As a direct consequence of these representation we obtain the following monotonicity properties of the Kullback-Leibler divergence rate.
\begin{corollary}\label{cor:incr}\hfill
\begin{enumerate}[label=(\alph*)]
    \item For fixed $\theta_2 \in \Real{}$, the function $\theta_1 \mapsto \KL{\theta_1}{\theta_2}$ is strictly increasing in the interval $[\theta_2, \infty]$ and strictly decreasing in the interval $[-\infty, \theta_2]$.
    
    \item For fixed $\mu_2 \in (m, M)$, the function $\mu_1 \mapsto \KL{\mu_1}{\mu_2}$ is strictly increasing in the interval $[\mu_2, M]$ and strictly decreasing in the interval $[m, \mu_2]$.
\end{enumerate}
\end{corollary}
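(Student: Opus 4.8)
The plan is to reduce both parts to the closed-form expressions for the divergence rate supplied by \autoref{lem:KL}, and within part (a) to a single differentiation. Fix $\theta_2 \in \Real{}$ and set
$g(\theta_1) = \KL{\theta_1}{\theta_2} = \theta_1 \dot{A}(\theta_1) - A(\theta_1) - \bigl(\theta_2 \dot{A}(\theta_1) - A(\theta_2)\bigr)$ for $\theta_1 \in \Real{}$, which is differentiable since $A$ is analytic on $\Real{}$ by \autoref{lem:dual-map}(a). Differentiating term by term, the contribution $\theta_1 \ddot A(\theta_1) + \dot A(\theta_1)$ from the first term loses its $\dot A(\theta_1)$ against the third term, and one is left with the clean identity $g'(\theta_1) = (\theta_1 - \theta_2)\,\ddot{A}(\theta_1)$.

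Next I would invoke the facts established inside the proof of \autoref{lem:dual-map}(c): $\ddot{A}(\theta) \ge 0$ for every $\theta$, and $\ddot{A}$ does not vanish identically on any nonempty open interval. Hence $g'(\theta_1) \ge 0$ for $\theta_1 \ge \theta_2$, $g'(\theta_1) \le 0$ for $\theta_1 \le \theta_2$, and in neither regime is $g'$ identically zero on any subinterval (away from $\theta_2$), since there $g'$ vanishes exactly where $\ddot A$ does. A continuous function with nonnegative (resp.\ nonpositive) derivative that is not identically zero on any subinterval is strictly increasing (resp.\ decreasing); this gives strict monotonicity of $g$ on $[\theta_2, \infty)$ and on $(-\infty, \theta_2]$.

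It then remains to fold in the endpoints $\pm\infty$. By the continuity of the Kullback--Leibler divergence rate (the very continuity already used to prove \autoref{lem:KL}) together with the limiting formulas $\KL{\infty}{\theta_2} = -\log\rho(\overline{P}_\infty) - (\theta_2 M - A(\theta_2))$ and $\KL{-\infty}{\theta_2} = -\log\rho(\overline{P}_{-\infty}) - (\theta_2 m - A(\theta_2))$, the map $\theta_1 \mapsto \KL{\theta_1}{\theta_2}$ is continuous on $[-\infty,\infty]$. For finite $\theta_1 > \theta_2$, picking an intermediate $\theta_1' \in (\theta_1, \infty)$ yields $\KL{\theta_1}{\theta_2} < \KL{\theta_1'}{\theta_2} \le \KL{\infty}{\theta_2}$, and symmetrically at $-\infty$, which upgrades the monotonicity to the closed extended intervals and proves (a). Part (b) then follows from (a) by the order-preserving reparametrization $\mu_i = \dot{A}(\theta_i)$: by \autoref{lem:dual-map}(c)--(d) and its extension to $\overline{\Real{}}$, $\dot A$ is a strictly increasing bijection of $[-\infty,\infty]$ onto $[m,M]$, and by definition $\KL{\mu_1}{\mu_2} = \KL{\dot{A}\inv(\mu_1)}{\dot{A}\inv(\mu_2)}$, so $\mu_1 \mapsto \KL{\mu_1}{\mu_2}$ inherits the monotonicity pattern with $M,m$ playing the roles of $\infty,-\infty$.

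The main obstacle I anticipate is the bookkeeping at the endpoints: one must be certain the divergence rate is genuinely continuous up to $\pm\infty$ — equivalently, that the limit formulas in \autoref{lem:KL} really are the boundary values — and that strictness survives the limit, which is precisely why the intermediate-point comparison is used rather than a naive passage to the limit in a weak inequality. The interior strict monotonicity, by contrast, is routine once the identity $g'(\theta_1) = (\theta_1 - \theta_2)\ddot{A}(\theta_1)$ is in hand and the non-degeneracy of $\ddot A$ from \autoref{lem:dual-map}(c) is quoted.
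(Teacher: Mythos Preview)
Your proposal is correct and is exactly the natural fleshing-out of what the paper intends: the paper states the corollary as a ``direct consequence of these representations'' (i.e., of \autoref{lem:KL}) and gives no further argument, so your computation $g'(\theta_1)=(\theta_1-\theta_2)\ddot A(\theta_1)$ together with the nonnegativity and non-degeneracy of $\ddot A$ from the proof of \autoref{lem:dual-map}(c) is precisely the implied route. Your endpoint bookkeeping and the reduction of (b) to (a) via the strictly increasing bijection $\dot A$ are likewise the obvious steps and introduce nothing foreign to the paper's setup.
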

We close this appendix by establishing that the Kullback-Leibler divergence rate
is the convex conjugate of the log-Perron-Frobenius eigenvalue.
\begin{lemma}\label{lem:conv-conj}
\[
\KL{\mu}{\mu (0)} =
\sup_{\theta \in \Real{}}\:\{\theta \mu - A (\theta)\} =
\begin{cases}
\dss \sup_{\theta \ge 0}\:\{\theta \mu - A (\theta)\}, &
\text{if} ~ \mu \in [\mu (0), M] \\
\dss \sup_{\theta \le 0}\:\{\theta \mu - A (\theta)\}, &
\text{if} ~ \mu \in [m, \mu (0)].
\end{cases}
\]
\begin{proof}
Fix $\mu \in (m, M)$. The function
$\theta \mapsto \theta \mu - A (\theta)$ is strictly concave
and its derivative vanishes at $\theta = \dot{A}\inv (\mu)$,
which belong in $[0, \infty)$ when $\mu \in [\mu (0), M)$
and in $(-\infty, 0]$ when $\mu \in (m, \mu (0)]$.
Therefore, using~\autoref{lem:KL} we obtain
\[
\sup_{\theta \in \Real{}}\: \{\theta \mu - A (\theta)\}
= \dot{A}\inv (\mu) \mu - A (\dot{A}\inv (\mu)) = \KL{\mu}{\pi (f)}.
\]
Similarly when $\mu = M$ or $\mu = m$, the derivative only vanishes at $\infty$ and $-\infty$ respectively, and so from a combination of~\autoref{lem:limit} and~\autoref{lem:KL} we obtain
\[
\sup_{\theta \in \Real{}}\: \{\theta M - A (\theta)\}
= \lim_{\theta \to \infty} (\theta M - A (\theta)) = \KL{M}{\pi (f)},
\]
and
\[
\sup_{\theta \in \Real{}}\: \{\theta m - A (\theta)\}
= \lim_{\theta \to -\infty} (\theta m - A (\theta)) = \KL{m}{\pi (f)}.
\]
\end{proof}
\end{lemma}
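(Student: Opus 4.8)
The plan is to read off everything from the one‑variable analysis of the strictly concave map $g_\mu(\theta):=\theta\mu-A(\theta)$, splitting into the interior case $\mu\in(m,M)$ and the two extreme cases $\mu\in\{m,M\}$, and to match the resulting suprema with the explicit formulas for $\KL{\cdot}{\cdot}$ from \autoref{lem:KL}.

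For $\mu\in(m,M)$: since $\dot A$ is strictly increasing on $\Real{}$ by \autoref{lem:dual-map}(c), $A$ is strictly convex, so $g_\mu$ is strictly concave and differentiable with $g_\mu'(\theta)=\mu-\dot A(\theta)$. Because $\dot A$ is a bijection of $\Real{}$ onto $(m,M)$ by \autoref{lem:dual-map}(b),(d), the derivative $g_\mu'$ has a unique zero $\theta^\star:=\dot A\inv(\mu)$, which is therefore the unique global maximizer of $g_\mu$. I would then compute $\sup_{\theta\in\Real{}}g_\mu(\theta)=g_\mu(\theta^\star)=\dot A\inv(\mu)\,\mu-A(\dot A\inv(\mu))$ and identify it with $\KL{\mu}{\mu(0)}$ via \autoref{lem:KL}(b), using that $P=P_0$ gives $\dot A(0)=\mu(0)$ and $A(0)=0$, so $\dot A\inv(\mu(0))=0$ and the $\mu_2$‑dependent subtracted term in that formula vanishes. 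For the sign‑restricted suprema, monotonicity of $\dot A\inv$ yields $\theta^\star\ge 0$ when $\mu\ge\mu(0)$ and $\theta^\star\le 0$ when $\mu\le\mu(0)$; since $g_\mu$ is strictly concave, restricting the supremum to the half‑line containing $\theta^\star$ does not change it.

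For $\mu=M$ (and symmetrically $\mu=m$): here $g_M'(\theta)=M-\dot A(\theta)=M-\pi_\theta(f)>0$ for every finite $\theta$, because $P_\theta$ is irreducible so $\pi_\theta$ is everywhere positive and $f$ is non‑constant. Hence $g_M$ is strictly increasing and $\sup_{\theta\in\Real{}}g_M(\theta)=\lim_{\theta\to\infty}\bigl(\theta M-A(\theta)\bigr)$, a limit that exists and equals $-\log\rho(\overline{P}_\infty)$ by \autoref{lem:limit}(a); by \autoref{lem:KL}(b) (again using $\dot A\inv(\mu(0))=0$) this is exactly $\KL{M}{\mu(0)}$. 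Since $g_M$ is increasing, the supremum over $\theta\ge 0$ agrees with the supremum over all of $\Real{}$. The case $\mu=m$ is identical, invoking \autoref{lem:limit}(b) and the corresponding line of \autoref{lem:KL}(b), with the supremum taken over $\theta\le 0$.

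The only genuinely delicate point is the behaviour at the two extreme means, where the supremum is not attained at any finite $\theta$: there one must appeal to the limit identities (\autoref{lem:limit}, or equivalently \autoref{lem:limit-KL}) both to guarantee that $\theta M-A(\theta)$ actually converges and to pin its limit down as the Kullback–Leibler rate $\KL{M}{\mu(0)}$ obtained by continuous extension. Everything else is routine single‑variable calculus applied to the strictly concave function $g_\mu$.
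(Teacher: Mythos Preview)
Your proposal is correct and follows essentially the same approach as the paper: analyze the strictly concave one-variable function $\theta\mapsto\theta\mu-A(\theta)$, locate the maximizer via $\dot A^{-1}(\mu)$ for interior $\mu$, handle the boundary cases $\mu\in\{m,M\}$ through the limits in \autoref{lem:limit}, and identify the resulting expressions with the Kullback--Leibler rate via \autoref{lem:KL}. You simply spell out a few steps more explicitly (e.g.\ why $A(0)=0$ and $\dot A^{-1}(\mu(0))=0$ make the subtracted term in \autoref{lem:KL}(b) vanish), but the argument is the same.
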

\section{Concentration for Markov Chains}
\label{app:concentration}

We first use continuity in order to get a uniform bound on the ratio of the entries of the right Perron-Frobenius eigenvector.
\begin{lemma}\label{lem:evec-ratio}
Let $P$ be an irreducible stochastic matrix on $S$,
which combined with $f : S \to \Real{}$ satisfies~\eqref{eqn:as-up-1},~\eqref{eqn:as-up-2},~\eqref{eqn:as-low-1}, and~\eqref{eqn:as-low-2}. 
There exists a constant $C = C (P, \phi) \ge 1$ such that
\[
C\inv \le \sup_{\theta \in \Real{}, x, y \in S}\: \frac{v_\theta (y)}{v_\theta (x)} \le C.
\]
If in addition $P$ is a positive stochastic matrix then we can take
$C = \max_{x, y, z} \frac{P (y, z)}{P (x, z)}$.
\begin{proof}
For any $x, y \in S$, the ratio $\frac{v_\theta (y)}{v_\theta (x)}$ is a positive real number, and due to~\autoref{lem:dual-map} a continuous function of $\theta$. In addition~\autoref{lem:PF} and~\autoref{lem:limit} suggest that its limit points $\frac{v_\infty (y)}{v_\infty (x)}, ~ \frac{v_{-\infty} (y)}{v_{-\infty} (x)}$ are positive real numbers as well,
hence we can take $C = \sup_{\theta \in \Real{}, x, y \in S}\: \frac{v_\theta (y)}{v_\theta (x)} \ge 1$, which is guaranteed to be finite.

In the special case that $P$ is a positive stochastic matrix, we use the fact that $v_\theta$ is a right Perron-Frobenius eigenvector of $\tilde{P}_\theta$ in order to write
\[
\frac{v_\theta (y)}{v_\theta (x)} =
\frac{\sum_w \tilde{P}_\theta (y, w) v_\theta (w)}{\sum_w \tilde{P}_\theta (x, w) v_\theta (w)}, ~ \text{for all} ~ x, y \in S.
\]
Now using the simple inequality
\[
\left(\min_z \frac{\tilde{P}_\theta (y, z)}{\tilde{P}_\theta (x, z)}\right) \tilde{P}_\theta (x, w)
\le \tilde{P}_\theta (y, w)
\le \left(\max_z \frac{\tilde{P}_\theta (y, z)}{\tilde{P}_\theta (x, z)}\right)
\tilde{P}_\theta (x, w), ~ \text{for all} ~ x,y,w \in S,
\]
and observing that $\frac{\tilde{P}_\theta (y, z)}{\tilde{P}_\theta (x, z)} = \frac{P (y, z)}{P (x, z)}$ we obtain
\[
\min_z \frac{P (y, z)}{P (x, z)}
\le \frac{v_\theta (y)}{v_\theta (x)}
\le \max_z \frac{P (y, z)}{P (x, z)}.
\]
\end{proof}
\end{lemma}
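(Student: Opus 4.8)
The plan is to separate the two claims: first the qualitative statement that $C := \sup_{\theta \in \Real{}, x,y \in S} v_\theta(y)/v_\theta(x)$ is a finite number that is at least one (valid under the general hypotheses), and then the explicit formula $C = \max_{x,y,z} P(y,z)/P(x,z)$ available when $P$ is positive. For the general bound I would fix a pair $x, y \in S$ and study the single-variable function $g_{x,y}(\theta) = v_\theta(y)/v_\theta(x)$. By part (a) of \autoref{lem:dual-map} the eigenvector $v_\theta$ is analytic in $\theta$ and strictly positive, so $g_{x,y}$ is continuous and strictly positive on all of $\Real{} = \Theta$. The crux is controlling its behaviour at the two ends: \autoref{lem:limit} gives $v_\theta \to v_\infty$ as $\theta \to \infty$ and $v_\theta \to v_{-\infty}$ as $\theta \to -\infty$, while \autoref{lem:PF} guarantees that the limiting right eigenvectors $v_\infty, v_{-\infty}$ are strictly positive. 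Hence $g_{x,y}$ extends to a continuous, strictly positive function on the compactified line $\overline{\Real{}} = \Real{} \cup \{\pm\infty\}$, which is compact, and therefore attains a finite positive maximum. Since $S$ is finite, maximizing over the finitely many pairs $(x,y)$ shows $C$ is finite; taking $x = y$ (or using the symmetry $x \leftrightarrow y$) gives $C \ge 1$, and since each ratio and its reciprocal are both bounded by $C$, the two-sided bound $C^{-1} \le v_\theta(y)/v_\theta(x) \le C$ follows.

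For the positive case I would instead exploit the eigenvector fixed-point relation directly. Writing the Perron--Frobenius identity $\rho(\theta) v_\theta(x) = \sum_w \tilde{P}_\theta(x,w) v_\theta(w)$ for both $x$ and $y$ and dividing, the spectral radius cancels and
\[
\frac{v_\theta(y)}{v_\theta(x)} = \frac{\sum_w \tilde{P}_\theta(y,w) v_\theta(w)}{\sum_w \tilde{P}_\theta(x,w) v_\theta(w)}.
\]
When $P$ is positive every $\tilde{P}_\theta(x,w)$ is positive, so for each $w$ there is the elementary termwise sandwich $\bigl(\min_z \tilde{P}_\theta(y,z)/\tilde{P}_\theta(x,z)\bigr)\tilde{P}_\theta(x,w) \le \tilde{P}_\theta(y,w) \le \bigl(\max_z \tilde{P}_\theta(y,z)/\tilde{P}_\theta(x,z)\bigr)\tilde{P}_\theta(x,w)$. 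Summing these against the strictly positive weights $v_\theta(w)$ and dividing by $\sum_w \tilde{P}_\theta(x,w) v_\theta(w)$ yields $\min_z \tilde{P}_\theta(y,z)/\tilde{P}_\theta(x,z) \le v_\theta(y)/v_\theta(x) \le \max_z \tilde{P}_\theta(y,z)/\tilde{P}_\theta(x,z)$. The final observation is that the exponential tilt cancels in the ratio, $\tilde{P}_\theta(y,z)/\tilde{P}_\theta(x,z) = P(y,z)e^{\theta f(z)} / \bigl(P(x,z)e^{\theta f(z)}\bigr) = P(y,z)/P(x,z)$, which is independent of $\theta$; maximizing over $x,y,z$ then gives the uniform bound with $C = \max_{x,y,z} P(y,z)/P(x,z)$.

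The main obstacle is the general, possibly non-positive, case, where the clean termwise argument is unavailable: if $P(x,z) = 0$ for some $z$ then $\tilde{P}_\theta(x,z) = 0$ and the ratio $\tilde{P}_\theta(y,z)/\tilde{P}_\theta(x,z)$ is undefined, so the rows of $\tilde{P}_\theta$ cannot be sandwiched uniformly. This is exactly why the general argument must route through compactness rather than an explicit estimate, and the delicate point is verifying that the endpoint eigenvectors stay strictly positive. That positivity is not automatic, since the limiting matrices $\overline{P}_\infty$ and $\overline{P}_{-\infty}$ are degenerate (whole blocks of columns are zeroed out); here I would lean on the structural conditions \eqref{eqn:as-up-1}--\eqref{eqn:as-low-2}, which are precisely the hypotheses \autoref{lem:PF} requires: irreducibility of the $S_M$ (resp. $S_m$) block together with every remaining row having a positive entry into that block places the degenerate matrices in the block form of \autoref{lem:PF} and thereby forces a strictly positive right Perron eigenvector. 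Once that positivity is secured, the compactification of $\Real{}$ closes the argument.
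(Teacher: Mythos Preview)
Your proposal is correct and follows essentially the same approach as the paper's proof: compactness via \autoref{lem:dual-map}(a), \autoref{lem:PF}, and \autoref{lem:limit} for the general case, and the eigenvector ratio identity with the termwise sandwich (and cancellation of the exponential tilt) for the positive case. Your write-up is in fact more detailed than the paper's, particularly in explaining why the structural conditions \eqref{eqn:as-up-1}--\eqref{eqn:as-low-2} are needed to secure positivity of the limiting right eigenvectors.
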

Next we establish a Proposition which gives us an approximation of the log-Perron-Frobenius eigenvalue using the
log-moment-generating-function 
\[
A_n (\theta) =
\frac{1}{n} \log \E_0 \exp \left\{\theta (\phi (X_1) + \ldots + \phi (X_n))\right\}
\]
\begin{proposition}\label{prop:log-mgf-estim}
Let $P$ be an irreducible stochastic matrix on $S$,
which combined with $f : S \to \Real{}$ satisfies~\eqref{eqn:as-up-1},~\eqref{eqn:as-up-2},~\eqref{eqn:as-low-1}, and~\eqref{eqn:as-low-2}. Then
\[
|A_n (\theta) - A (\theta)| \le \frac{\log C}{n}, ~ \text{for all} ~ \theta \in \Real{},
\]
where $C = C (P, f)$ is the constant from~\autoref{lem:evec-ratio}.
\begin{proof}
We start with the following calculation
\begin{align*}
e^{n A_n (\theta)} 
&= \sum_{x_0, x_1, \ldots, x_{n-1}, x_n} q (x_0) P (x_0, x_1) e^{\theta \phi (x_1)} \cdots 
P (x_{n-1}, x_n) e^{\theta \phi (x_n)} \\
&= \sum_{x_0, x_n} q (x_0) \tilde{P}_\theta^n (x_0, x_n).
\end{align*}
From this using the simple inequality
\[
\frac{v_\theta (y)}{\max_x v_\theta (x)} \le 1 \le \frac{v_\theta (y)}{\min_x v_\theta (x)}, ~ \text{for all} ~ y \in S,
\]
together with the fact that $v_\theta$ is a right Perron-Frobenius eigenvector of $\tilde{P}_\theta$ we obtain
\[
\min_{x, y} \frac{v_\theta (y)}{v_\theta (x)} e^{n A (\theta)}
\le e^{n A_n (\theta)}
\le \max_{x, y} \frac{v_\theta (y)}{v_\theta (x)} e^{n A (\theta)}.
\]
The conclusion now follows by applying~\autoref{lem:evec-ratio}
\end{proof}
\end{proposition}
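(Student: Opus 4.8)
The plan is to evaluate the moment generating function explicitly as an $n$-step path sum against the tilted matrix $\tilde{P}_\theta$, and then to sandwich that quantity between two multiples of $e^{nA(\theta)}$ by using the right Perron--Frobenius eigenvector $v_\theta$ as a comparison weight. This avoids any appeal to a full spectral decomposition.

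First I would expand the expectation defining $A_n(\theta)$. Running the chain under the generator $P$ started from $q$, summing over all sample paths $x_0, x_1, \ldots, x_n$, and absorbing each factor $e^{\theta f(x_i)}$ into the corresponding transition weight, I get
\[
e^{nA_n(\theta)} = \sum_{x_0, x_n} q(x_0)\, \tilde{P}_\theta^{\,n}(x_0, x_n),
\]
where $\tilde{P}_\theta(x,y) = P(x,y)e^{\theta f(y)}$ is precisely the tilted matrix used to build the exponential family.

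Next I would exploit the eigenvector relation. Since $v_\theta$ is the right Perron--Frobenius eigenvector of $\tilde{P}_\theta$ with eigenvalue $\rho(\theta) = e^{A(\theta)}$, iterating gives $\sum_y \tilde{P}_\theta^{\,n}(x,y)\,v_\theta(y) = e^{nA(\theta)} v_\theta(x)$. The issue is that the path sum above carries an \emph{unweighted} row sum $\sum_y \tilde{P}_\theta^{\,n}(x,y)$, not the eigenvector-weighted one. To bridge the two I would insert the elementary bounds $v_\theta(y)/\max_z v_\theta(z) \le 1 \le v_\theta(y)/\min_z v_\theta(z)$, which yield, for each fixed $x_0$,
\[
\frac{v_\theta(x_0)}{\max_z v_\theta(z)}\, e^{nA(\theta)}
\le \sum_{x_n} \tilde{P}_\theta^{\,n}(x_0, x_n)
\le \frac{v_\theta(x_0)}{\min_z v_\theta(z)}\, e^{nA(\theta)}.
\]
Averaging over $x_0$ against the probability weights $q(x_0)$, and using that such a weighted average lies between the minimum and maximum of its summand, collapses this to
\[
\min_{x,y} \frac{v_\theta(y)}{v_\theta(x)}\, e^{nA(\theta)}
\le e^{nA_n(\theta)}
\le \max_{x,y} \frac{v_\theta(y)}{v_\theta(x)}\, e^{nA(\theta)}.
\]

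Finally I would take logarithms, divide by $n$, and invoke \autoref{lem:evec-ratio}, which supplies a single constant $C = C(P,f)$ with $C^{-1} \le v_\theta(y)/v_\theta(x) \le C$ uniformly in $\theta, x, y$. This forces both the left and right deviations down to $\pm(\log C)/n$, giving $|A_n(\theta) - A(\theta)| \le (\log C)/n$. The only real subtlety is the one already flagged: a naive diagonalization would require uniform control of the subdominant spectrum of $\tilde{P}_\theta$ across all $\theta$, which is unpleasant; the trick of comparing the unweighted row sum to the eigenvector-weighted one sidesteps this entirely, and the remaining uniformity in $\theta$ is precisely what \autoref{lem:evec-ratio} has already isolated, so it poses no further obstacle here.
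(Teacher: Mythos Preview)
Your proposal is correct and follows essentially the same approach as the paper: both expand $e^{nA_n(\theta)}$ as the $q$-averaged row sum of $\tilde{P}_\theta^{\,n}$, sandwich the unweighted row sum against the eigenvector-weighted one via the elementary bounds on $v_\theta(y)/\min_z v_\theta(z)$ and $v_\theta(y)/\max_z v_\theta(z)$, and then invoke \autoref{lem:evec-ratio}. Your version simply spells out the per-$x_0$ bound before averaging, which is a minor expository difference rather than a different argument.
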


One more ingredient that we need is a uniform bound of the constant $C (P_\theta, f)$ over $\theta \in \Real{}$.
\begin{lemma}\label{lem:uniform}
For the constant from~\autoref{lem:evec-ratio} we have that,
\[
\sup_{\theta \in \Real{}}\: C (P_\theta, f) \le C (P, f)^2.
\]
\begin{proof}
Recall that
\[
C (P_{\theta_2}, f) =
\sup_{\theta_1 \in \Real{}, x, y \in S}\: \frac{v_{\widetilde{\left(P_{\theta_2}\right)}_{\theta_1}} (y)}
{v_{\widetilde{\left(P_{\theta_2}\right)}_{\theta_1}} (x)}.
\]
We claim that
\[
\frac{v_{\widetilde{\left(P_{\theta_2}\right)}_{\theta_1}} (y)}
{v_{\widetilde{\left(P_{\theta_2}\right)}_{\theta_1}} (x)}
=
\frac{v_{\tilde{P}_{\theta_1 + \theta_2}} (y)
v_{\tilde{P}_{\theta_2}} (x)}
{v_{\tilde{P}_{\theta_1 + \theta_2}} (x)
v_{\tilde{P}_{\theta_2}} (y)}.
\]
To see this we just need to verify that
\[
v_{\tilde{P}_{\theta_2}} (x)
v_{\widetilde{\left(P_{\theta_2}\right)}_{\theta_1}} (x),
~ x \in S,
\]
is a right eigenvector of $\tilde{P}_{\theta_1 + \theta_2}$,
with associated eigenvalue $\rho (\tilde{P}_{\theta_2})
\rho \left(\widetilde{\left(P_{\theta_2}\right)}_{\theta_1}\right)$, which from the Perron-Frobenious theory has to be the Perron-Frobenious eigenvalue since the associated eigenvector has positive entries. The verification is straight forward
\begin{align*}
\sum_y \tilde{P}_{\theta_1 + \theta_2} (x, y)
v_{\tilde{P}_{\theta_2}} (y)
v_{\widetilde{\left(P_{\theta_2}\right)}_{\theta_1}} (y)
&= \rho (\tilde{P}_{\theta_2}) v_{\tilde{P}_{\theta_2}} (x)
\sum_y \widetilde{\left(P_{\theta_2}\right)}_{\theta_1} (x, y)
v_{\widetilde{\left(P_{\theta_2}\right)}_{\theta_1}} (y) \\
&= \rho (\tilde{P}_{\theta_2})
\rho \left(\widetilde{\left(P_{\theta_2}\right)}_{\theta_1}\right) v_{\tilde{P}_{\theta_2}} (x)
v_{\widetilde{\left(P_{\theta_2}\right)}_{\theta_1}} (x),
~\text{for all}~ x \in S.
\end{align*}

From this we see that
\[
\sup_{\theta_1, \theta_2 \in \Real{}, x, y \in S} 
\frac{v_{\widetilde{\left(P_{\theta_2}\right)}_{\theta_1}} (y)}
{v_{\widetilde{\left(P_{\theta_2}\right)}_{\theta_1}} (x)} \le 
\left(
\sup_{\theta_1, \theta_2 \in \Real{}, x, y \in S}  
\frac{v_{\tilde{P}_{\theta_1 + \theta_2}} (y)}
{v_{\tilde{P}_{\theta_1 + \theta_2}} (x)}
\right)
\left(
\sup_{\theta_2 \in \Real{}, x, y \in S} 
\frac{v_{\tilde{P}_{\theta_2}} (x)}
{v_{\tilde{P}_{\theta_2}} (y)}
\right) =
C (P, f)^2.
\]
\end{proof}
\end{lemma}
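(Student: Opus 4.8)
The plan is to reduce the ``doubly tilted'' eigenvector ratios that define $C(P_{\theta_2}, f)$ back to the ``singly tilted'' ratios governed by $C(P, f)$. First I would unfold the definition: applying \autoref{lem:evec-ratio} with the generator taken to be the family member $P_{\theta_2}$ gives
\[
C(P_{\theta_2}, f) = \sup_{\theta_1 \in \Real{},\, x, y \in S} \frac{v_{\widetilde{\left(P_{\theta_2}\right)}_{\theta_1}}(y)}{v_{\widetilde{\left(P_{\theta_2}\right)}_{\theta_1}}(x)},
\]
so it suffices to bound each such ratio uniformly in $\theta_1, \theta_2$. Here $v_\theta$ denotes, as in the main text, the right Perron--Frobenius eigenvector of $\tilde{P}_\theta$.

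The heart of the argument is the observation that tilting $P$ by $\theta_2$ and then tilting the result by $\theta_1$ amounts, up to a diagonal similarity and a scalar, to tilting $P$ directly by $\theta_1 + \theta_2$. Writing out the definitions,
\[
\widetilde{\left(P_{\theta_2}\right)}_{\theta_1}(x, y) = P_{\theta_2}(x, y)\, e^{\theta_1 f(y)} = \frac{v_{\theta_2}(y)}{\rho(\theta_2)\, v_{\theta_2}(x)}\, \tilde{P}_{\theta_1 + \theta_2}(x, y),
\]
that is, $\widetilde{\left(P_{\theta_2}\right)}_{\theta_1} = \rho(\theta_2)\inv\, \diag{v_{\theta_2}}\inv\, \tilde{P}_{\theta_1 + \theta_2}\, \diag{v_{\theta_2}}$. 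A diagonal similarity preserves the spectrum up to the scalar and sends right eigenvectors through $\diag{v_{\theta_2}}\inv$; applying it to the Perron--Frobenius eigenvector $v_{\theta_1 + \theta_2}$ of $\tilde{P}_{\theta_1 + \theta_2}$ shows that $x \mapsto v_{\theta_1+\theta_2}(x) / v_{\theta_2}(x)$ is a positive right eigenvector of $\widetilde{\left(P_{\theta_2}\right)}_{\theta_1}$ with eigenvalue $\rho(\theta_1+\theta_2)/\rho(\theta_2)$. By the uniqueness clause of the Perron--Frobenius theorem this positive vector must be a scalar multiple of the Perron--Frobenius eigenvector, which yields the key identity
\[
\frac{v_{\widetilde{\left(P_{\theta_2}\right)}_{\theta_1}}(y)}{v_{\widetilde{\left(P_{\theta_2}\right)}_{\theta_1}}(x)} = \frac{v_{\theta_1 + \theta_2}(y)\, v_{\theta_2}(x)}{v_{\theta_1 + \theta_2}(x)\, v_{\theta_2}(y)}.
\]
Equivalently, one checks directly that $x \mapsto v_{\theta_2}(x)\, v_{\widetilde{\left(P_{\theta_2}\right)}_{\theta_1}}(x)$ is the Perron--Frobenius eigenvector of $\tilde{P}_{\theta_1+\theta_2}$; this is the route I would write up, since it needs only a one-line verification of the eigenvalue equation plus positivity, and avoids invoking the similarity-transform machinery explicitly.

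With the identity in hand the conclusion is immediate. Taking the supremum over $\theta_1, \theta_2, x, y$ and splitting the product,
\[
\sup_{\theta_1, \theta_2, x, y} \frac{v_{\theta_1+\theta_2}(y)\, v_{\theta_2}(x)}{v_{\theta_1+\theta_2}(x)\, v_{\theta_2}(y)} \le \left(\sup_{\theta, x, y} \frac{v_\theta(y)}{v_\theta(x)}\right)\left(\sup_{\theta, x, y} \frac{v_\theta(x)}{v_\theta(y)}\right) = C(P, f)^2,
\]
where I use that $\theta_1 + \theta_2$ ranges over all of $\Real{}$ as $\theta_1$ does, and that the second supremum equals the first by the symmetry $x \leftrightarrow y$. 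I expect the main obstacle to be pinning down the composition-of-tiltings identity rigorously --- in particular verifying that the normalizing factors $v_{\theta_2}$ and $\rho(\theta_2)$ telescope correctly and then promoting the manufactured positive eigenvector to the genuine Perron--Frobenius eigenvector via uniqueness; everything after that is the one-line supremum estimate above.
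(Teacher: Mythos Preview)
Your proposal is correct and follows essentially the same route as the paper: establish the key identity for the doubly-tilted eigenvector ratio by verifying that $x \mapsto v_{\theta_2}(x)\, v_{\widetilde{(P_{\theta_2})}_{\theta_1}}(x)$ is a positive right eigenvector of $\tilde{P}_{\theta_1+\theta_2}$ (hence the Perron--Frobenius one), then split the supremum of the product into a product of suprema, each equal to $C(P,f)$. Your diagonal-similarity viewpoint is a nice conceptual gloss, but the direct verification you settle on is exactly the paper's argument.
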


We are now ready to prove~\autoref{thm:concentration-bound}.
\begin{proof}[Proof of~\autoref{thm:concentration-bound}]\hfill\break
We first prove the bound for $\theta = 0$.
Fix $\mu \in [\mu (0), M]$, and $\eta \ge 0$.
\begin{align*}
\Pr_0 \left(f (X_1) + \ldots + f (X_n) \ge n \mu\right)
&\le \Pr_0 \left(e^{\eta(f (X_1) + \ldots + f (X_n))} \ge e^{\eta n \mu}\right) \\
&\le e^{- n (\eta \mu - A_n (\eta))} \\
&\le C (P, f) e^{- n (\eta \mu - A (\eta))},
\end{align*}
where the second inequality is Markov's inequality, and the third is the estimate from~\autoref{prop:log-mgf-estim}.
By optimizing over $\eta \ge 0$ and applying~\autoref{lem:conv-conj}, we obtain
\[
\Pr_0 \left(f (X_1) + \ldots + f (X_n) \ge n \mu\right) \le C (P, f)
e^{- n \KL{\mu}{\mu (0)}}.
\]
Applying this bound with $P_\theta$ in place of $P$, and using~\autoref{lem:uniform} we conclude that for $\mu \in [\mu (\theta), M]$
\[
\Pr_\theta \left(f (X_1) + \ldots + f (X_n) \ge n \mu\right) \le C (P_\theta, f)
e^{- n \KL{\mu}{\mu (\theta)}} \le C (P, f)^2
e^{- n \KL{\mu}{\mu (\theta)}}.
\]
\end{proof}

\section{Upper Bound on the Sample Complexity: the \texorpdfstring{$\pmb{(\alpha, \delta)}$}{Lg}-Track-and-Stop Strategy}\label{app:upper-bound}

The proof of~\autoref{lem:KL-concentation} uses the concentration bound~\autoref{thm:concentration-bound}, combined with the monotonicity of the Kullback-Leibler divergence rate~\autoref{cor:incr}.
\begin{proof}[Proof of~\autoref{lem:KL-concentation}]\hfill\break
We first note the following inclusion of events
\begin{align*}
& \bigcup_{t=1}^\infty \bigcup_{n=1}^t \left\{
N_a (t) \KL{\hat{\mu}_a (N_a (t))}{\mu_a} \ge \beta_{\alpha, \delta} (t)/2, ~ N_a (t) = n
\right\} \\
&\qquad \subseteq 
\bigcup_{t=1}^\infty \bigcup_{n=1}^t \left\{
n \KL{\hat{\mu}_a (n)}{\mu_a} \ge \beta_{\alpha, \delta} (t)/2
\right\} \\
&\qquad = \bigcup_{t=1}^\infty \left\{
t \KL{\hat{\mu}_a (t)}{\mu_a} \ge \beta_{\alpha, \delta} (t)/2
\right\},
\end{align*}
where the last equality follows because, by the monotonicity of $t \mapsto \beta_{\alpha, \delta} (t)/2$ we have that for each $n \in \Intpp{}$ and for each $t = n, n+1, \ldots$
\[
\left\{
n \KL{\hat{\mu}_a (n)}{\mu_a} \ge \beta_{\alpha, \delta} (t)/2
\right\}
\subseteq
\left\{
n \KL{\hat{\mu}_a (n)}{\mu_a} \ge \beta_{\alpha, \delta} (n)/2
\right\}.
\]
Combining this with a union bound we obtain
\begin{align*}
& \Prtheta \left(\exists t \in \Intpp{} : N_a (t) \KL{\hat{\mu}_a (N_a (t))}{\mu_a} \ge \beta_{\alpha, \delta} (t)/2\right) \\
&\qquad\le \Pr_{\theta_a} \left(\exists t \in \Intpp{} : t \KL{\hat{\mu}_a (t)}{\mu_a} \ge \beta_{\alpha, \delta} (t)/2\right) \\
&\qquad\le \sum_{t=1}^\infty \Pr_{\theta_a} \left(\KL{\hat{\mu}_a (t)}{\mu_a} \ge 
\frac{\beta_{\alpha, \delta} (t)}{2 t}\right).
\end{align*}
We focus on upper bounding
\[
\Pr_{\theta_a} \left(\KL{\hat{\mu}_a (t)}{\mu_a} \ge 
\frac{\beta_{\alpha, \delta} (t)}{2 t}, ~ \hat{\mu}_a (t) \ge \mu_a\right).
\]
Let $\mu_{a,t}$ be the unique (due to~\autoref{cor:incr}) solution (if no solution exists then the probability is already zero) of the equations
\[
\KL{\mu_{a,t}}{\mu_a} = \frac{\beta_{\alpha, \delta} (t)}{2 t}, \quad\text{and}
\quad  \mu_a \le \mu_{a,t} \le M.
\]
Then the combination of~\autoref{cor:incr} and~\autoref{thm:concentration-bound} gives
\[
\Pr_{\theta_a} \left(\KL{\hat{\mu}_a (t)}{\mu_a} \ge 
\frac{\beta_{\alpha, \delta} (t)}{2 t}, ~ \hat{\mu}_a (t) \ge \mu_a\right)
= \Pr_{\theta_a} \left(\hat{\mu}_a (t) \ge \mu_{a, t}\right) 
\le \frac{\delta}{D} \frac{1}{t^\alpha} C^2.
\]
We further upper bound the constant $c (P_{\mu_a})$ by $c (P)^2$
using~\autoref{lem:uniform}, in order to obtain a uniform upper bound for any Markovian arm coming from the family.

A similar bound holds true for
\[
\Pr_{\theta_a} \left(\KL{\hat{\mu}_a (t)}{\mu_a} \ge 
\frac{\beta_{\alpha, \delta} (t)}{2 t}, ~ \hat{\mu}_a (t) \le \mu_a\right).
\]
The conclusion follows by summing up over all $t$ and using the simple integral based estimate
\[
\sum_{t=1}^\infty \frac{1}{t^\alpha} \le \frac{\alpha}{1-\alpha}.
\]
\end{proof}
Embarking on the proof of the fact that the $(\alpha, \delta)$-Track-and-Stop strategy is $\delta$-PC we first show that the error probability is at most $\delta$ no matter the bandit model.
\begin{proposition}\label{prop:almost-PC}
Let $\btheta \in \bTheta$, $\delta \in (0, 1)$, and $\alpha > 1$.
Let $\calA_\delta$ be a sampling strategy that uses
an arbitrary sampling rule, the $(\alpha, \delta)$-Chernoff's stopping rule and the best sample mean decision rule. Then,
\[
\Prtheta (\tau_{\alpha, \delta} < \infty, \hat{a}_{\tau_{\alpha, \delta}} \neq a^* (\bmu)) \le \delta.
\]
\begin{proof}
The following lemma which is easy to check, and its proof is omitted, will be useful in our proof of~\autoref{prop:almost-PC}.
\begin{lemma}\label{lem:variational-formula}
The generalized Jensen-Shannon divergence
\[
I_a (\mu, \lambda) = a \KL{\mu}{a \mu + (1-a) \lambda} +
(1-a) \KL{\lambda}{a \mu + (1-a) \lambda},
~ \text{for} ~ a \in [0, 1]
\]
satisfies the following variational characterization
\[
I_a (\mu, \lambda) = \inf_{\mu' < \lambda'}\: \left\{a \KL{\mu}{\mu'} + (1-a) \KL{\lambda}{\lambda'}\right\}.
\]
\end{lemma}
If $\tau_{\alpha, \delta} < \infty$ and $\hat{a}_{\tau_{\alpha, \delta}} \neq a^* (\bmu)$,
then there $\exists t \in \Intpp{}$ and there $\exists a \neq a^* (\bmu)$
such that $Z_{a, a^* (\bmu)} (t) > \beta_{\alpha, \delta} (t)$. In this case we also have
\begin{align*}
\beta_{\alpha, \delta} (t)
&< Z_{a, a^* (\bmu)} (t) \\
&= N_a (t) \KL{\hat{\mu}_a (N_a (t))}{\hat{\mu}_{a,a^* (\bmu)} (N_a (t), N_{a^* (\bmu)} (t))} + \\
&\qquad N_{a^* (\bmu)} (t) \KL{\hat{\mu}_{a^* (\bmu)} (N_{a^* (\bmu)} (t))}
{\hat{\mu}_{a,a^* (\bmu)} (N_a (t), N_{a^* (\bmu)} (t))} \\
&= (N_a (t) + N_{a^* (\bmu)} (t))
I_{\frac{N_a (t)}{N_a (t) + N_{a^* (\bmu)} (t)}} (\hat{\mu}_a (N_a (t)), \hat{\mu}_{a^* (\bmu)} (N_{a^* (\bmu)} (t))) \\
&= \inf_{\mu_a' < \mu_a''}\:\left\{
N_a (t) \KL{\hat{\mu}_a (N_a (t))}{\mu_a'} + 
N_{a^* (\bmu)} (t) \KL{\hat{\mu}_{a^* (\bmu)} (N_{a^* (\bmu)} (t))}{\mu_a''}
\right\} \\
&\le N_a (t) \KL{\hat{\mu}_a (N_a (t))}{\mu_a} + 
N_{a^* (\bmu)} (t) \KL{\hat{\mu}_{a^* (\bmu)} (N_{a^* (\bmu)} (t))}{\mu_{a^* (\bmu)}},
\end{align*}
where the third equality follows from the variational formula for the generalized Jensen-Shannon divergence given in~\autoref{lem:variational-formula}, and the last inequality follows from the fact that $\mu_a < \mu_{a^* (\bmu)}$.

This in turn implies that,
$\beta_{\alpha, \delta} (t)/2 < N_a (t) \KL{\hat{\mu}_a (N_a (t))}{\mu_a}$, or
$\beta_{\alpha, \delta} (t)/2 < N_{a^* (\bmu)} (t) \KL{\hat{\mu}_{a^* (\bmu)} (N_{a^* (\bmu)} (t))}{\mu_{a^* (\bmu)}}$.
Therefore by union bounding over the $K$ arms we obtain
\begin{align*}
& \Prtheta (\tau_\delta < \infty, \hat{a}_{\tau_\delta} \neq a^* (\bmu)) \\
& \qquad \le \sum_{a=1}^K \Prtheta \left(\exists t \in \Intpp{} :
N_a (t) \KL{\hat{\mu}_a (N_a (t))}{\mu_a} \ge \beta_{\alpha, \delta} (t)/2 \right).
\end{align*}
The conclusion now follows by applying~\autoref{lem:KL-concentation}.
\end{proof}
\end{proposition}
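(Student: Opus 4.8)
The plan is to decode what an incorrect termination entails and then reduce it to the uniform deviation estimate of~\autoref{lem:KL-concentation}. First I would note that on the event $\{\tau_{\alpha, \delta} < \infty,\ \hat{a}_{\tau_{\alpha, \delta}} \neq a^* (\bmu)\}$ the stopping rule has fired at some finite time $t = \tau_{\alpha, \delta}$ and declared a winner $\hat{a} \neq a^* (\bmu)$. By the definition of the $(\alpha, \delta)$-Chernoff stopping rule this forces $Z_{\hat{a}, b} (t) > \beta_{\alpha, \delta} (t)$ for every $b \neq \hat{a}$; specializing to $b = a^* (\bmu)$ gives $Z_{\hat{a}, a^* (\bmu)} (t) > \beta_{\alpha, \delta} (t)$. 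Consequently it suffices to bound the probability that \emph{some} arm $a \neq a^* (\bmu)$ ever produces $Z_{a, a^* (\bmu)} (t) > \beta_{\alpha, \delta} (t)$.

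The second step is to rewrite this statistic as a generalized Jensen--Shannon divergence and relax it using the true means. Recognizing the pooled estimate $\hat{\mu}_{a, a^* (\bmu)}$ as the convex combination of $\hat{\mu}_a (N_a (t))$ and $\hat{\mu}_{a^* (\bmu)} (N_{a^* (\bmu)} (t))$ with weight $N_a (t) / (N_a (t) + N_{a^* (\bmu)} (t))$, the statistic equals $(N_a (t) + N_{a^* (\bmu)} (t))\, I_{N_a (t)/(N_a (t) + N_{a^* (\bmu)} (t))}(\hat{\mu}_a (N_a (t)), \hat{\mu}_{a^* (\bmu)} (N_{a^* (\bmu)} (t)))$. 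Invoking the variational formula of~\autoref{lem:variational-formula} and, crucially, using the true means $\mu_a < \mu_{a^* (\bmu)}$ as a feasible point in its infimum yields
\[
Z_{a, a^* (\bmu)} (t) \le N_a (t) \KL{\hat{\mu}_a (N_a (t))}{\mu_a} + N_{a^* (\bmu)} (t) \KL{\hat{\mu}_{a^* (\bmu)} (N_{a^* (\bmu)} (t))}{\mu_{a^* (\bmu)}}.
\]

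Combining the two steps, an incorrect stop forces $\beta_{\alpha, \delta} (t)$ to be strictly smaller than this sum, so at least one of the two summands must exceed $\beta_{\alpha, \delta} (t)/2$. A union bound over the $K$ arms then reduces the target probability to $\sum_{a=1}^K \Prtheta(\exists t \in \Intpp{} : N_a (t) \KL{\hat{\mu}_a (N_a (t))}{\mu_a} \ge \beta_{\alpha, \delta} (t)/2)$, and each summand is at most $\delta/K$ by~\autoref{lem:KL-concentation}, giving the bound $\delta$. The main point requiring care is the relaxation: one must confirm that the ordered pair of true means is genuinely admissible in the Jensen--Shannon infimum---this is exactly where the instance assumption $\mu_a < \mu_{a^* (\bmu)}$ for inferior arms enters---and that the degenerate case where the defining KL-rate equation has no solution contributes zero probability, as already handled inside~\autoref{lem:KL-concentation}.
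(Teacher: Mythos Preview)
Your proposal is correct and follows essentially the same route as the paper: unpack the stopping event to obtain $Z_{\hat a,\,a^*(\bmu)}(t)>\beta_{\alpha,\delta}(t)$, rewrite the statistic via the generalized Jensen--Shannon divergence, apply the variational formula of \autoref{lem:variational-formula} with the true means $\mu_a<\mu_{a^*(\bmu)}$ as a feasible pair, split the resulting sum by the pigeonhole argument, union bound over the $K$ arms, and conclude with \autoref{lem:KL-concentation}. The only additions you make are the explicit remarks about admissibility of the true-mean pair and the degenerate KL-equation case, both of which are implicit in the paper's treatment.
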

\begin{proof}[Proof of~\autoref{prop:PC}]\hfill\break
Following the proof of Proposition 13 in~\cite{GK16},
and observing that in their proof they show that $\tau_{\alpha,\delta}$ is essentially bounded we obtain that 
\[
\Etheta [\tau_{\alpha, \delta}]  < \infty.
\]
This combined with~\autoref{prop:almost-PC} establishes that the
$(\alpha, \delta)$-Track-and-Stop strategy is $\delta$-PC.
\end{proof}
\begin{proof}[Proof of~\autoref{thm:upper-bound}]\hfill\break
Finally for the proof the sample complexity of the 
$(\alpha, \delta)$-Track-and-Stop strategy in~\autoref{thm:upper-bound} we follow the proof of Theorem 14 in~\cite{GK16}, where we substitute the usage of the law of large numbers with the law of large numbers for Markov chains, and in order to establish their Lemma 19 we use our concentration bound in~\autoref{thm:concentration-bound}.
\end{proof}
\end{appendices}

\end{document}